\documentclass[12pt]{article}

\usepackage{amsmath, amssymb, amsthm}
\usepackage{graphicx}
\usepackage{tikz}
\usepackage{dsfont}
\usepackage{hyperref}
\usepackage{enumitem}
\usepackage{mathtools}
\usepackage{geometry}
\usepackage{titlesec}
\titleformat{\section}{\Large\bfseries}{\thesection.}{0.6em}{}
\titleformat{\subsection}{\large\bfseries}{\thesubsection}{0.6em}{}

\theoremstyle{plain}
\newtheorem{theorem}{Theorem}[section]
\newtheorem{lemma}[theorem]{Lemma}
\newtheorem{proposition}[theorem]{Proposition}
\newtheorem{corollary}[theorem]{Corollary}

\theoremstyle{definition}
\newtheorem{definition}[theorem]{Definition}
\newtheorem{example}[theorem]{Example}

\theoremstyle{remark}

\newcommand{\cF}{{\cal{F}}}

\newcommand{\fusc}{{\rm fusc}}

\newcommand{\QQ}{{\mathds Q}}

\newcommand{\llb}{[\![}
\newcommand{\rrb}{]\!]}

\title{\textbf{
Dimers, filters, and $q$-deformed real numbers}}
\author{James Propp}
\date{}

\begin{document}
\maketitle

\begin{abstract}
\noindent
This article associates to each positive real number $x$
an inhomogeneous dimer model with an activity parameter $q>0$
and uses it to define a positive-real-valued function $q \mapsto \llb x \rrb_q$
that is related to (and indeed was inspired by) 
the algebraic $q$-deformation $[x]_q$
introduced by Morier-Genoud and Ovsienko~\cite{MO22}.
The technical details are most transparent
when the dimer model is replaced by an equivalent model 
involving filters in partially ordered sets.
When $x$ is rational we have $\llb x \rrb_q = q \: [x]_q$,
and it seems likely that more generally
the two sides of the equation agree wherever both are defined. 

\end{abstract}

\section{Introduction}
\label{sec:intro}

The $q$-rationals and $q$-reals were introduced by Morier-Genoud and Ovsienko
in a recent series of papers \cite{MO20}, \cite{MO22}, \cite{MO25} by means
of two constructions.  Their first construction associates to each rational 
number $r>0$ a rational function $[r]_q$ in $\QQ(q)$ that generalizes the 
classical $q$-integer $[n]_q = 1+q+q^2+\cdots+q^{n-1}$. These rational 
functions arise naturally from continued fractions and possess a number of 
remarkable properties.  Most strikingly, one can then construct $q$-adic 
limits of $q$-deformed rational numbers to obtain $q$-deformations $[x]_q$ 
of irrational numbers $x$ as Laurent series with integer coefficients. 

The purpose of this paper is to take a different sort of deformation of 
real numbers via probability theory.
For each positive real number $x$, I define 
an edge-weighted bipartite planar graph $G_x(q)$, 
finite or infinite according to whether $x$ is rational or irrational,
with some edges assigned weight $q>0$ and all others assigned weight 1.
$G_x(q)$ also has a special edge $e$.  
Recall that a dimer cover of a graph 
(aka {\em perfect matching} or {\em matching} for short) 
is a collection of edges having the property that 
every vertex of the graph belongs to exactly one edge in the collection.

In the case in which $x$ is rational, I define the weight of a dimer cover 
of the graph $G_x(q)$ as the product of the weights 
of its constituent edges. I let $\mu_{x,q}$ be the probability distribution 
in which each matching has probability proportional to its weight,
and I define $\llb x \rrb_q$ as the odds 
that a $\mu_{x,q}$-random matching of $G_x(q)$ includes $e$.
That is, $\llb x \rrb_q$ equals 
the probability that a random matching of $G_x(q)$ includes $e$
divided by 
the probability that a random matching of $G_x(q)$ omits $e$.
(Throughout this paper, when an event has probability $p$,
I will use the phrase ``the odds in favor of the event \dots'',
or ``the odds of the event \dots'', or ``the odds that \dots'', 
to signify the ratio $p/(1-p)$.)

In the case in which $x$ is irrational, I define $\mu_{x,q}$ 
as a limit of $\mu_{r,q}$ with $r$ rational and approaching $x$.
The main technical result of the paper is that this limit, 
suitably defined, exists for any sequence of rationals approaching $x$
and that this value is independent of the approximating rationals.
I can then define $\llb x \rrb_q$ once again as the odds
that a $\mu_{x,q}$-random matching of $G_x(q)$ includes $e$.

\begin{figure}[h!]
\begin{center}
\includegraphics[width=2.5in]{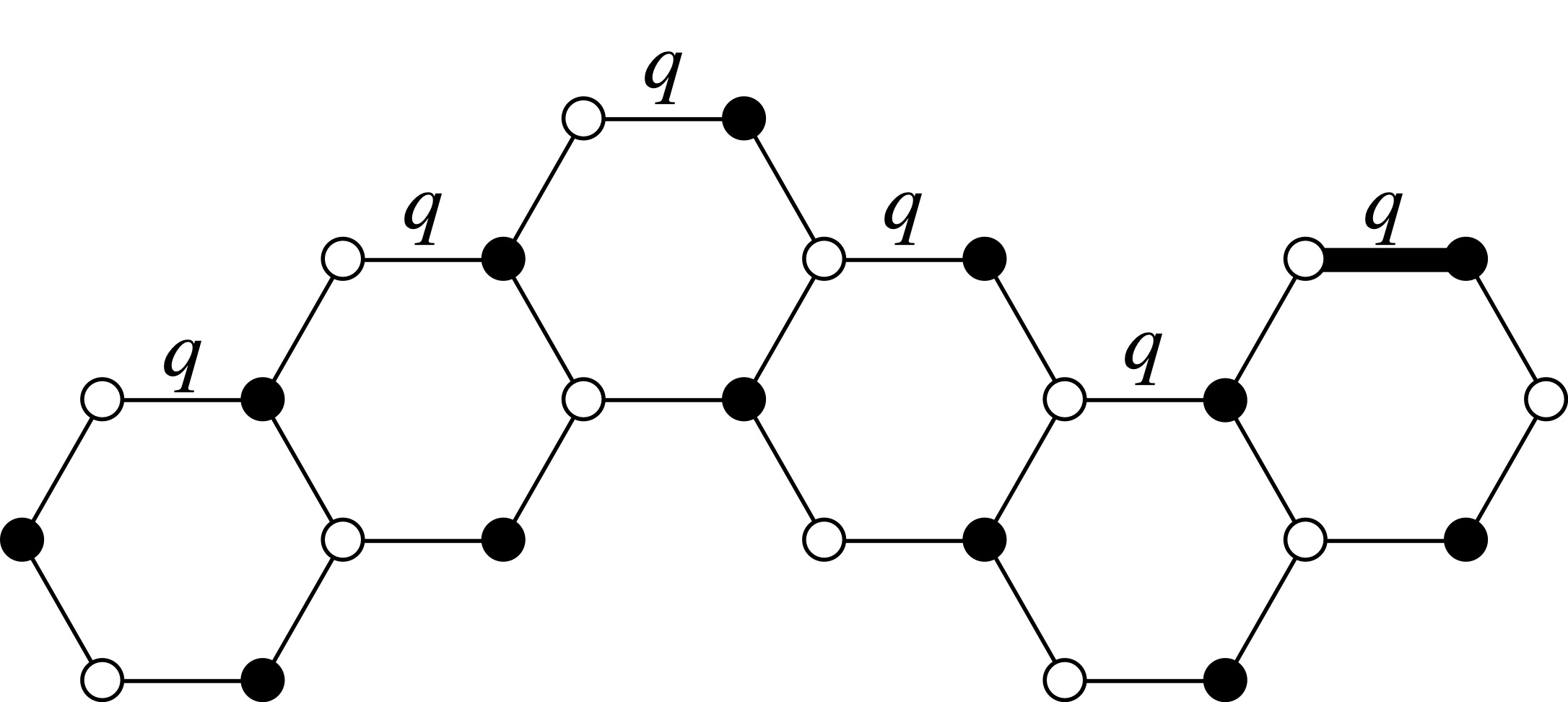}
\end{center}
\caption{The weighted hexagon snake graph $G_{10/7}(q)$.}
\label{fig:hex-fifty-seven}
\end{figure}

\begin{example}
Figure~\ref{fig:hex-fifty-seven} shows the 
finite graph $G_{10/7}(q)$ with special edge $e$ shown in bold. Each dimer 
cover of $G_{10/7}$ is assigned probability proportional to the product 
of the weights of its constituent edges, where edges marked ``$q$'' 
have weight $q>0$ and unmarked edges have weight 1. Let $Z$ be the sum 
of the weights of all the dimer covers of $G_{10/7}(q)$. The probability 
that a $\mu_{10/7,\,q}$-random dimer cover of $G_{10/7}(q)$ includes $e$ is 
$(q+q^2+2q^3+3q^4+2q^5+q^6)/Z$ while the probability that a 
$\mu_{10/7,\,q}$-random dimer cover of $G_{10/7}(q)$ omits $e$ is 
$(1+q+2q^2+2q^3+q^4)/Z$. $\llb 10/7 \rrb_q$ equals the ratio of 
the former probability to the latter. 
\end{example}

\begin{figure}[h!]
\begin{center}
\includegraphics[width=4.5in]{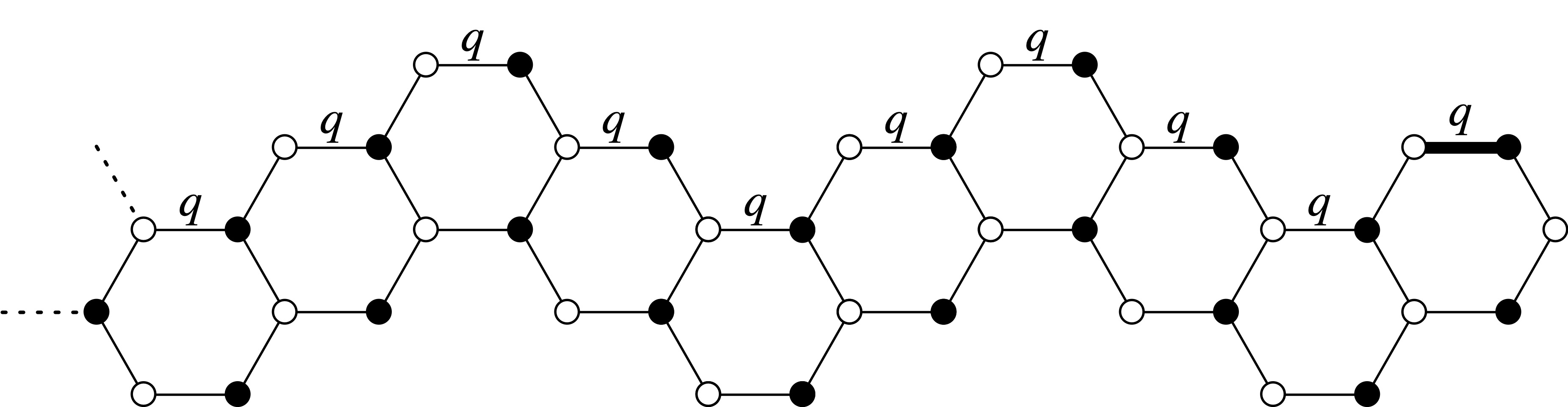}
\end{center}
\caption{The weighted hexagon snake graph $G_{\sqrt{2}}(q)$.}
\label{fig:hex-sqrt-two}
\end{figure}

\begin{example}
Figure~\ref{fig:hex-sqrt-two} shows the 
infinite graph $G_{\sqrt{2}}(q)$ with special edge $e$ shown in bold. 
As before, unmarked edges have weight 1.
Let $\mu_{\sqrt{2},\,q}$ be the measure on the set of dimer covers of 
$G_{\sqrt{2}}(q)$ that is obtained as a limit of measures associated with 
values of $r$ that converge to $\sqrt{2}$ (any such sequence of rationals 
will do).  $\llb \sqrt{2} \rrb_q$ equals 
the probability that a $\mu_{\sqrt{2},\,q}$-random dimer cover of 
$G_{\sqrt{2}}(q)$ includes $e$ divided by the probability that a 
$\mu_{\sqrt{2},\,q}$-random dimer cover of $G_{\sqrt{2}}(q)$ omits $e$. 
\end{example}

It can be shown (see Proposition~\ref{prop:q}) that 
when $x$ is a positive rational number, 
$\llb x \rrb_q$ is equal to $q \: [x]_q$,
where $[x]_q$ is the deformation of $x$ defined by Morier-Genoud and Ovsienko.
The factor of $q$ that distinguishes my $q$-deformation of $x$
from that of Morier-Genoud and Ovsienko
is merely cosmetic, but there is a deeper distinction: 
$\llb x \rrb_q$ is well-defined for all $q>0$ but not for
complex values of $q$, whereas $[x]_q$ is defined for complex values of
$q$ but only those that lie in a suitable disk centered at 0. Indeed,
Etingof~\cite{E25} has shown that when $x$ is irrational,
$[x]_q$ cannot extend meromorphically to 
any open set properly containing the open unit disk.
It is natural to hope that $\llb x \rrb_q$ and $q \: [x]_q$ can unified to give
a function that is holomorphic on a neighborhood of the positive real
ray in the complex plane, but it is unclear how to achieve this.

I will do most of the work in the context of filters of posets 
(partially ordered sets), since the resulting one-dimensional models 
are geometrically simpler than planar dimer models. For instance,
the hexagon snake graph of Figure~\ref{fig:hex-fifty-seven} is replaced 
by the {\em snake poset} shown in Figure~\ref{fig:snake-fifty-seven}.
After developing results for finite and infinite snake posets
I translate the poset construction into the language of dimer covers.

An outline of the paper follows.

In Section~\ref{sec:snakes}, I give an exposition
of Stern's diatomic sequence in terms of posets and filters
and use it to give a probabilistic interpretation
of the Calkin-Wilf enumeration of the positive rationals
(see Figure~\ref{fig:snakes}).
Specifically, given a positive rational $r$,
one uses the binary expansion of an associated positive integer $n$
to construct a finite poset $S_r$ (a ``snake poset'')
such that $r$, the $n$th Calkin-Wilf rational, 
is equal to the probability that a uniform random filter of $S_r$ includes 0 
divided by the complementary probability 
that a uniform random filter of $S_r$ omits 0. 
Much of this material appears elsewhere but with different conventions,
so it is included to keep the article self-contained.

In Section~\ref{sec:q}, I bring $q$ into the story,
replacing the uniform distribution on filters 
studied in Section~\ref{sec:snakes}
by the one-parameter family of weighted distributions $\mu_{r,q}$ 
in which the probability of a filter is proportional to
$q$ to the power of its cardinality.
Like the preceding section,
this section does not present genuinely new results,
but the results are less well-known than
those of Section~\ref{sec:snakes},
and readers are likely to appreciate
the explicit discussion of transfer-matrix machinery.

In Section~\ref{sec:irrational}, I bring positive irrational numbers 
into the story, replacing the finite snake posets 
of sections~\ref{sec:snakes} and~\ref{sec:q}
by countably infinite snake posets $S_x$;
this requires taking limits whose well-definedness is not immediately apparent.

In Section~\ref{sec:golden}, I apply the results of the preceding section
to the particular number $(1+\sqrt{5})/2$.

In Section~\ref{sec:dimer}, I relate filters in snake posets
to dimer covers in snake graphs as defined in~\cite{P20}.
This reinterpretation makes tacit use of 
the lattice structure for dimer covers 
of bipartite plane graphs described in \cite{P25}
but applied in reverse, so that one reconstructs a bipartite graph $G$
from the lattice structure on perfect matchings of $G$.

Section~\ref{sec:future} contains assorted remarks and suggestions
for directions of future work.

Few of the ingredients of this paper are new.
Other authors including Morier-Genoud and Ovsienko
have used finite snake posets before;
other authors~\cite{AL25} have used a single poset 
as the shared site of the two counting problems
associated with the numerator and denominator of $[r]_q$
(for $r$ rational) and have interpreted those enumerations
in terms of perfect matchings of finite snake graphs; see~\cite{O25}.
But as far as this author knows,
nobody has brought infinite snakes or hexagonal snakes into the picture
in connection with $[x]_q$ for irrational $x$.

A word about terminology: There is some disagreement in the literature 
regarding the proper meaning of the term 
``snake posets'' (aka ``fence posets'' and ``zigzag posets''). 
The contested issue is, must the zigs and zags strictly alternate?
Some authors say yes, and therefore use the term 
``generalized snake posets'' to describe the broader notion; 
others say that the word ``snake'' already permits loose alternation.
Since the phrase ``snake graph'' was coined over two decades ago
in the preprint version of~\cite{P20} and 
since that article allowed loose alternation, 
I will follow the same convention here,
and will use the term ``snake posets'' to denote
what some authors call ``generalized snake posets''.

\section{Positive rational numbers}
\label{sec:snakes}

\begin{figure}[h!]
\begin{center}
\includegraphics[width=4.5in]{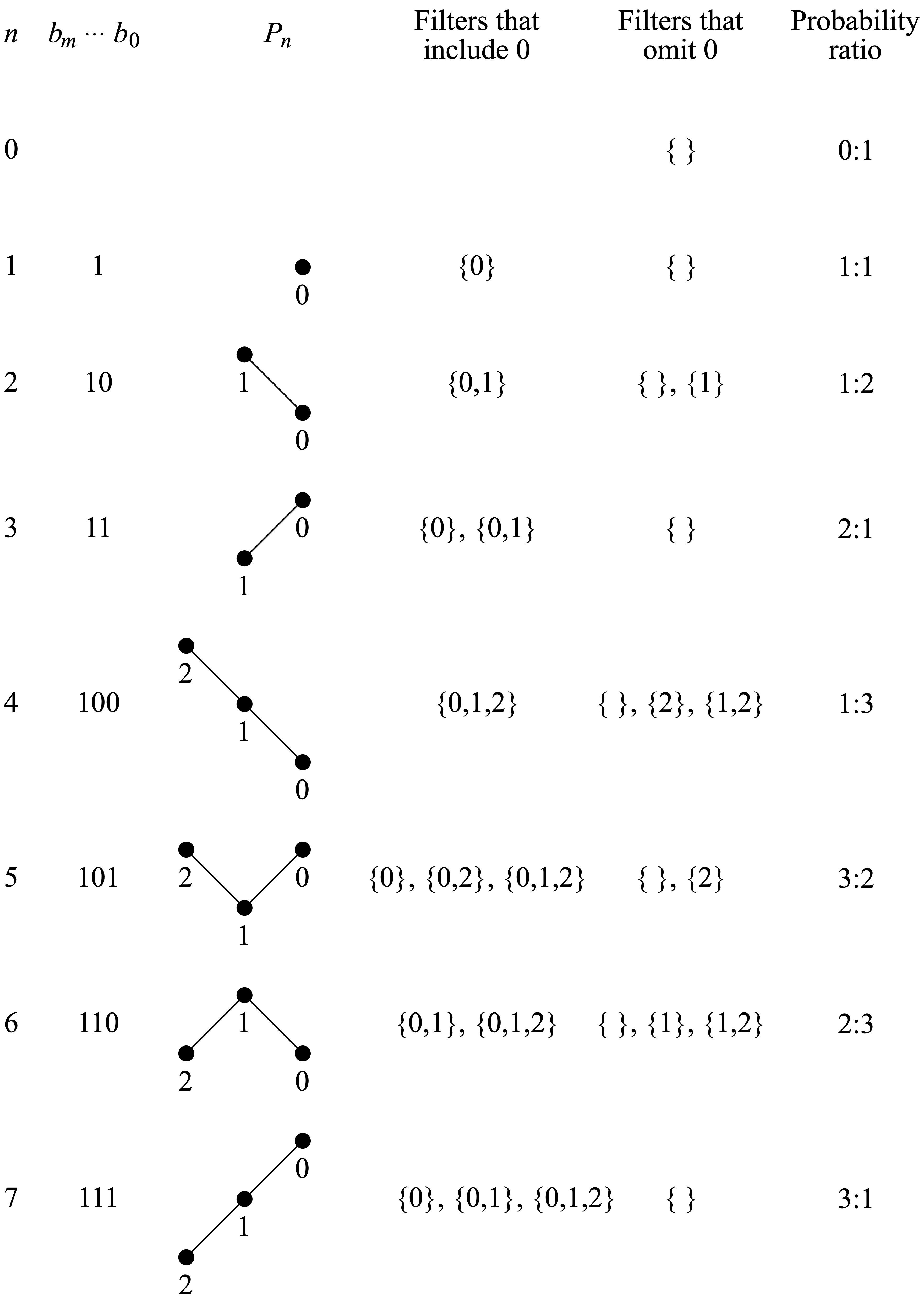}
\end{center}
\caption{The snake posets $P_n$ with $0 \leq n \leq 7$.}
\label{fig:snakes}
\end{figure}
A \emph{snake poset} $P$ is a poset 
whose Hasse diagram is a finite or infinite path;
in this article all infinite paths
will be half-infinite, i.e., will have one endpoint,
drawn on the right.
In the finite case, a snake poset is one
whose elements can be indexed as $v_0, v_1, v_2, \dots, v_m$ 
such that for each $0 \le i < m$,
$v_{i+1} \lessdot v_i$ or $v_{i+1} \gtrdot v_i$,
where no further relations hold besides the ones implied by these.
The infinite case is much the same, but without the cutoff at $m$.
(I assume that the reader has some basic acquaintance 
with poset theory, and for instance already knows that 
$\gtrdot$ and $\lessdot$ mean ``covers'' and ``is covered by'', respectively;
readers unacquainted with these notions should consult~\cite{St11}.)
For brevity I will write $v_i$ as just $i$,
but I stress that in this context one can have $i+1 \lessdot i$.

\begin{definition}
Let $P_0$ be the empty poset,
let $P_1$ be the poset with one element,
and when $n$ is a positive integer greater than 1
define $P_n$ as follows.
Write the binary representation of $n$ as 
$b_m \, b_{m-1} \, \cdots b_1 \, b_0$
(with $m = \lfloor \log_2 n \rfloor$,
$b_m = 1$, and $b_i \in \{0,1\}$ for all $i<m$,
with $\sum_{i \leq m} b_i 2^i = n$).
Then let $P_n$ be the poset on $0,1,\dots,m$
with $i+1 \gtrdot i$ (resp.\ $i+1 \lessdot i$)
if $b_i = 0$ (resp.\ $b_i = 1$)
for all $0 \leq i < m$.
\end{definition}
Note that $b_m$, which is always 1, plays no role in this construction.

The table in Figure~\ref{fig:snakes} shows 
the posets $P_0$ through $P_7$ in the third column.
Note that, in accordance with the way binary numerals are written
in drawing the snake poset of $P_n$ with $n>0$ I put the element 0 
(the ``head'' of the snake) at the right and the element $m$ at the left.
In this way one can draw the Hasse diagram of $P_n$
for $n>0$ by writing the binary expansion of $n$,
reading off the non-initial digits from left to right,
and correspondingly drawing edges from left to right,
with downward steps corresponding to 0s and upward steps corresponding to 1s.
as shown in the third column of the table.
The fourth and fifth columns, taken together,
give the {\em filters} of the poset $P_n$;
these are subsets $S$ of $P_n$ with the property that
whenever $i,j$ in $P_n$ satisfy $i \in S$ and $j \gtrdot i$,
one must have $j \in S$ as well.
(It is more common to state this property
using the partial ordering instead of the covering relation,
but the two definitions are equivalent by transitivity.)
The fourth column lists the filters that include 0
while the fifth column lists the filters that omit 0.
Finally, the sixth column gives the ratio of
the number of filters that include 0
to the number of filters that omit 0.
Anticipating the probabilistic point of view,
the column is labeled ``Probability ratio'';
it is the ratio of the probability of $E_0$ to the probability of $E_0^c$,
where $E_0$ is the set of filters that include 0
and $E_0^c$ is the set of filters that omit 0,
under the uniform probability distribution.
In keeping with probability theory nomenclature,
the symbol $E$ is chosen to suggest the word ``event''.

\begin{definition}
Let $i_n$ (resp.\ $o_n$) be the number of filters of $P_n$ 
that include (resp.\ omit) 0, so that $i_0 = 0$ and $o_0 = i_1 = o_1 = 1$.
\end{definition}

\begin{proposition}
Take $n \geq 2$ and let $n' = \lfloor n/2 \rfloor$. Then
$i_{n} = i_{n'}$ and $o_{n} = i_{n'}+o_{n'}$ when $n$ is even and
$i_{n} = i_{n'}+o_{n'}$ and $o_{n} = o_{n'}$ when $n$ is odd.
\label{prop:recur}
\end{proposition}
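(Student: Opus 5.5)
The plan is to compare $P_n$ directly with $P_{n'}$ by deleting the head. Write $n = \sum_{i\le m} b_i 2^i$ with $m \ge 1$, since $n \ge 2$. Then $n' = \lfloor n/2 \rfloor$ has binary representation $b_m\, b_{m-1} \cdots b_1$. By the definition, $P_{n'}$ is the snake poset on $0,1,\dots,m-1$ in which the relation between $j+1$ and $j$ is dictated by $b_{j+1}$. For $n'=1$ this reads as the one-element poset $P_1$, consistent with the convention above.

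The key step is to identify $P_{n'}$ with the subposet $\{1,\dots,m\}$ of $P_n$ via the shift $j \mapsto j+1$. In $P_n$ the relation between $(j+1)+1$ and $j+1$ is dictated by $b_{j+1}$, the same digit as in $P_{n'}$. So the covering relations among $1,\dots,m$ in $P_n$ are exactly those of $P_{n'}$, and the head $0$ of $P_{n'}$ corresponds to the element $1$ of $P_n$. Thus $P_n$ is $P_{n'}$ with a new element $0$ attached to $1$, with $1 \gtrdot 0$ if $b_0=0$ ($n$ even) and $1 \lessdot 0$ if $b_0=1$ ($n$ odd). Since the Hasse diagram is a path and the filter condition only involves covering pairs, a subset $S \subseteq P_n$ is a filter iff $S \cap \{1,\dots,m\}$ is a filter of (the shifted copy of) $P_{n'}$ and the single covering pair between $0$ and $1$ is respected.

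Next I set up the bijections in each case, splitting filters of $P_n$ according to whether they contain $0$.

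If $n$ is even, then $1 \gtrdot 0$. A filter containing $0$ must contain $1$, and adding $0$ to a filter containing $1$ is always allowed. So filters of $P_n$ containing $0$ biject with filters of $P_{n'}$ containing its head, giving $i_n = i_{n'}$. A filter omitting $0$ imposes no constraint on $1$, because $0$ lies below $1$. So these filters biject with all filters of $P_{n'}$, giving $o_n = i_{n'}+o_{n'}$.

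If $n$ is odd, then $0 \gtrdot 1$. Adding $0$ to any filter of $P_{n'}$ is always allowed, since $0$ has no upper cover in $P_n$; hence $i_n = i_{n'}+o_{n'}$. A filter omitting $0$ must omit $1$, since otherwise up-closure forces $0$ in. So these filters biject with filters of $P_{n'}$ omitting its head, giving $o_n = o_{n'}$.

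There is no real obstacle. The only point needing care is the index bookkeeping in the shift identification: checking that digit $b_{j+1}$ governs the same covering pair in both posets, and that the boundary case $n' = 1$ (where $P_{n'}$ has a single element and no digits are read) fits the same pattern.
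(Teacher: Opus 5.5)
Your proof is correct and follows the paper's argument. The paper likewise beheads $P_n$, identifies the remainder with $P_{n'}$ by downshifting labels, and counts filters in the same four cases according to the parity of $n$ and whether $0$ is included; your check that $b_{j+1}$ governs the same covering pair in both posets, and of the boundary case $n'=1$, just fills in details the paper leaves implicit.
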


\begin{proof}
We exploit the fact that the snake poset 
obtained from $P_n$ by removing the rightmost vertex and edge
(``beheading'' the snake) is isomorphic to the snake poset $P_{n'}$;
one need only relabel the vertices by subtracting 1.
We call this reindexing operation {\em downshifting},
as opposed to the operation of adding 1, which we call {\em upshifting}.
\begin{figure}[h!]
\begin{center}
\includegraphics[width=6in]{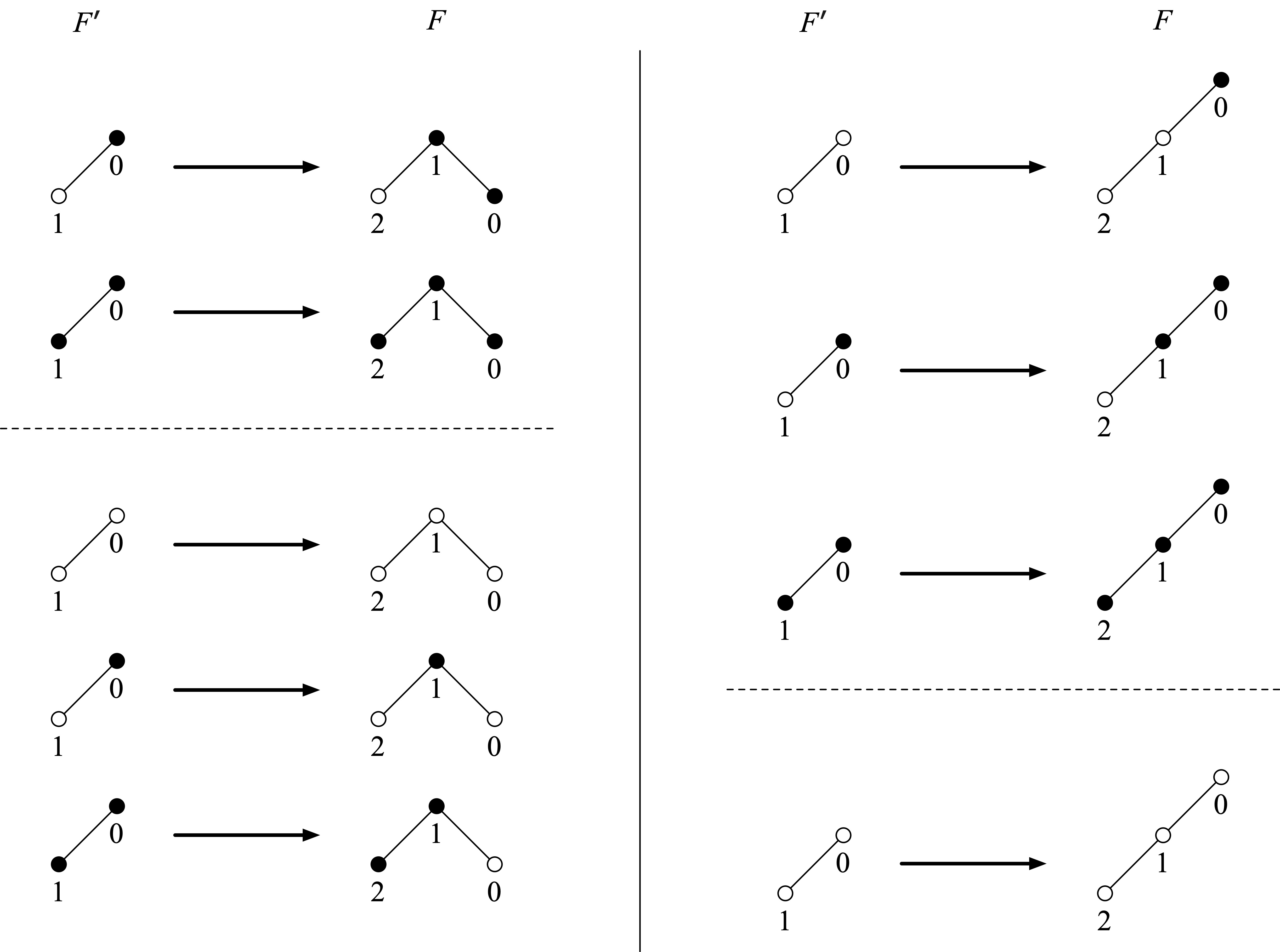}
\end{center} 
\caption{Filters for $P_6$ and $P_7$ from filters for $P_3$.}
\label{fig:snake-shift}
\end{figure}

If $n$ is even, then each filter $F$ of $P_n$ that includes 0
is obtained from a filter $F'$ of $P_{n'}$ that includes 0
by upshifting the elements and adjoining 0.
Likewise, each filter $F$ of $P_n$ that omits 0
is obtained from a filter $F'$ of $P_{n'}$ 
that may or may not include 0
by upshifting the elements (and not adjoining 0).
If $n$ is odd, then each filter $F$ of $P_n$ that includes 0
is obtained from a filter $F'$ of $P_{n'}$ 
that may or may not include 0
by upshifting the elements and adjoining 0,
while each filter $F$ of $P_n$ that omits 0
is obtained from a filter $F'$ of $P_{n'}$ that omits 0
by upshifting the elements (and not adjoining 0).
Figure~\ref{fig:snake-shift} illustrates the preceding assertions
for $n=6$ (left) and $n=7$ (right).
The claim follows.
\end{proof}

\begin{definition}
Define Dijkstra's function $\fusc(\cdot)$ by 
the initial conditions $\fusc(0) = 0$ and $\fusc(1) = 1$
and by the recurrence 
$$\fusc(n) = \left\{
\begin{array}{ll}
\fusc(n/2) & \mbox{for even $n > 1$,} \\
\fusc((n-1)/2) + \fusc((n+1)/2) & \mbox{for odd $n > 1$.}
\end{array} \right.$$
\end{definition}

\begin{proposition}
$i_n = \fusc(n)$ and $o_n = \fusc(n+1)$ for all $n \geq 0$.
\end{proposition}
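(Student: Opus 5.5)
We argue by strong induction on $n$, proving both identities $i_n = \fusc(n)$ and $o_n = \fusc(n+1)$ together. For the base cases $n=0$ and $n=1$ we read off the definitions: $i_0 = 0 = \fusc(0)$ and $o_0 = 1 = \fusc(1)$. Also $i_1 = 1 = \fusc(1)$, and $o_1 = 1 = \fusc(2)$, since $\fusc(2) = \fusc(1) = 1$ by the even case of the recurrence.

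For $n \geq 2$ we put $n' = \lfloor n/2 \rfloor$, so that $n' < n$. The induction hypothesis then gives $i_{n'} = \fusc(n')$ and $o_{n'} = \fusc(n'+1)$. We combine this with Proposition~\ref{prop:recur} and split according to the parity of $n$.

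If $n = 2k$ is even, then $n' = k$. Proposition~\ref{prop:recur} gives $i_n = i_k = \fusc(k) = \fusc(2k)$, using the even case of the $\fusc$ recurrence (valid since $2k > 1$). It also gives $o_n = i_k + o_k = \fusc(k) + \fusc(k+1)$. This equals $\fusc(2k+1)$ by the odd case of the recurrence, applied at $2k+1 > 1$.

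If $n = 2k+1$ is odd, then $n' = k$ with $k \geq 1$. Proposition~\ref{prop:recur} gives $i_n = i_k + o_k = \fusc(k) + \fusc(k+1) = \fusc(2k+1)$. It also gives $o_n = o_k = \fusc(k+1) = \fusc(2k+2)$, by the even case of the recurrence at $2k+2 > 1$.

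The only point needing care is this index bookkeeping. In each parity case, the quantity $n+1$ has the opposite parity to $n$, and the recurrence for $\fusc$ must be applied at an argument greater than $1$. Both requirements hold throughout because $n \geq 2$. Since both identities are established simultaneously at each $n$, the induction closes.
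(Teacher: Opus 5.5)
Your proof is correct and follows the paper's approach. The paper's proof is the one line ``By strong induction on $n$,'' and you have carried out exactly that induction: you check the base cases $n=0,1$, then combine Proposition~\ref{prop:recur} with the even and odd cases of the $\fusc$ recurrence according to the parity of $n$, verifying along the way that $\lfloor n/2\rfloor<n$ and that each $\fusc$ recurrence is applied at an argument exceeding $1$.
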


\begin{proof}
By strong induction on $n$.
\end{proof}

\begin{proposition}
For each positive rational number $r$
there is a unique positive integer $n$ for which $\fusc(n)/\fusc(n+1)$
(or equivalently $i_n / o_n$) equals $r$. 
\label{prop:cw}
\end{proposition}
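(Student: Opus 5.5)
Throughout I work with the pair $(i_n,o_n)$ and use Proposition~\ref{prop:recur} together with the identification $i_n=\fusc(n)$, $o_n=\fusc(n+1)$. The recurrence says that passing from $n'$ to $n=2n'$ sends $(a,b)=(i_{n'},o_{n'})$ to $(a,a+b)$, and passing to $n=2n'+1$ sends it to $(a+b,b)$. In terms of the ratio $r=a/b$, the even step is $r\mapsto r/(1+r)$ and the odd step is $r\mapsto r+1$. Starting from $n=1$, where $(i_1,o_1)=(1,1)$ and $r=1$, every $n\ge1$ is reached by a unique word in these two maps, read off from the binary digits of $n$ below the leading 1.

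\emph{Coprimality.} I first show that $\gcd(i_n,o_n)=1$ for all $n\ge1$, by induction: both maps $(a,b)\mapsto(a,a+b)$ and $(a,b)\mapsto(a+b,b)$ preserve the gcd, and $\gcd(1,1)=1$. So the fraction $i_n/o_n$ is in lowest terms, and equality of ratios is equivalent to equality of the pairs $(i_n,o_n)$. All the values are positive, so every ratio lies in $\QQ_{>0}$.

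\emph{Existence.} This is by strong induction on $p+q$, where $r=p/q$ in lowest terms. If $p=q$ then $r=1$ and $n=1$ works. If $p>q$, then $(p-q)/q$ is in lowest terms with a smaller sum; choose $m$ realizing it, and take $n=2m+1$, so that the odd step gives $(p,q)$. If $p<q$, use $p/(q-p)$ and $n=2m$ instead.

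\emph{Uniqueness.} Suppose $i_n/o_n=i_{n^*}/o_{n^*}$. By coprimality the pairs coincide, so it suffices to show that the pair $(i_n,o_n)$ determines $n$; I argue by strong induction on $i_n+o_n$. If $n\ge2$, the parity of $n$ is determined by whether $i_n>o_n$ (odd) or $i_n<o_n$ (even). Equality $i_n=o_n$ is impossible for $n\ge2$: it would force one of $i_{n'},o_{n'}$ to be $0$, but these are positive for $n'\ge1$. Only $n=1$ gives the pair $(1,1)$.
\begin{itemize}
\item The parity, together with the pair, determines $(i_{n'},o_{n'})$, which has a strictly smaller sum. By induction this determines $n'$.
\item Then $n=2n'$ or $n=2n'+1$ according to the parity already found.
\item The case $n'=0$ cannot occur, since $n\ge2$ gives $n'\ge1$.
\end{itemize}

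The main obstacle is the bookkeeping in the uniqueness step, in particular ruling out $i_n=o_n$ for $n\ge2$ and keeping the base cases straight. Both rest on the positivity of $i_m$ and $o_m$ for $m\ge1$, which I would note explicitly from the recurrence.
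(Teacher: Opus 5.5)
Your proof is correct, but it takes a different route from the paper's. The paper does not prove the statement from scratch. It attributes the result to Stern and Calkin--Wilf and exhibits the inverse map explicitly: write $r$ as an odd-length continued fraction $[c_1;c_2,\dots,c_m]$ and read off the binary digits of $n$ from right to left as $c_1$ ones, $c_2$ zeros, \dots, $c_m$ ones. It checks this on $10/7\mapsto 57$ and defers the full verification to Calkin and Wilf.

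You instead derive everything from Proposition~\ref{prop:recur}:
\begin{itemize}
\item gcd-preservation under $(a,b)\mapsto(a,a+b)$ and $(a,b)\mapsto(a+b,b)$ reduces equality of ratios to equality of pairs;
\item existence is the subtractive Euclidean algorithm, run as a strong induction on $p+q$;
\item uniqueness follows because, for $n\ge 2$, the sign of $i_n-o_n$ detects the parity of $n$ and hence lets you undo the last step.
\end{itemize}

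Your argument is complete and self-contained, relying only on that recurrence and positivity, which the paper's sketch is not. The paper's version gives an explicit closed form for $n$, and hence for the shape of $S_r$, in terms of the continued fraction. That description is what is later reused to define $S_x$ for irrational $x$ and the hexagon snakes $G_r$.

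The two are really the same algorithm. Unrolling your existence induction, each step fixes the lowest remaining bit of $n$. A run of steps with $p>q$ contributes low-order ones, the next run with $p<q$ contributes zeros, and so on. The terminal pair $(1,1)$ supplies the leading $1$. This is exactly why the paper must use the odd-length expansion: its last partial quotient counts that leading bit.
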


\begin{proof}
This is the fundamental theorem in the Calkin-Wilf sequence literature,
essentially due to Stern~\cite{S58}.
In one direction the bijection is simple: given a positive integer $n$,
the positive rational $r$ associated with $n$ is $\fusc(n)/\fusc(n+1)$.
In the other direction, suppose $r$ is a positive rational.
$r$ has two finite continued fraction expansions,
one with even length and one with odd length;
we will use the one with odd length.
E.g., $3/2$ can be written as the length-2 continued fraction $1+1/2$ 
or as the length-3 continued fraction $1+1/(1+1/1)$, so we will use the latter.
Write $r$ as $[c_1;c_2,c_3,\dots,c_m] := 
c_1 + 1/(c_2 + 1/(c_3 + \cdots + 1/c_m) \cdots )$ 
with $c_1 \geq 0$ and $c_2,...,c_m \geq 1$ with $m$ odd,
and create a binary string that, read from right to left,
consists of $c_1$ 1’s followed by $c_2$ 0’s 
followed by $c_3$ 1’s followed by~\dots~followed by $c_m$ 1’s.
This string is the binary representation of $n$.
For instance, when $r=10/7$ we have $r = 1 + 1/(2+1/3) = [1;2,3]$,
giving the bit-string string 111001 which,
interpreted as a binary expansion, gives 57,
which indeed satisfies $\fusc(57)/\fusc(58) = 10/7$.
For a full proof, see~\cite{CW00}. 
\end{proof}

\begin{definition}
For $r \geq 0$ rational,
define $S_r$ to be the poset $P_n$ for the unique integer $n \geq 0$
satisfying $r = \fusc(n)/\fusc(n+1)$
(see Proposition~\ref{prop:cw}).
Thus the posets $P_0,\dots,P_7$ shown in Figure~\ref{fig:snakes} 
will be denoted by $S_0$, $S_{1/1}$, $S_{1/2}$, $S_{2/1}$, $S_{1/3}$,
$S_{3/2}$, $S_{2/3}$, and $S_{3/1}$, respectively.
\end{definition}

Note that for $r,n$ with $S_r = P_n$,
the probability that a uniform random filter of $S_r$ includes 0 divided by
the probability that a uniform random filter of $S_r$ omits 0 is $i(n)/o(n)$
which equals $r$.

\begin{definition}
For $n \geq 0$
put $${\bf r}_n = \left( \begin{array}{cc} i_n & o_n \end{array} \right)$$
so that in particular 
${\bf r}_0 = \left( \begin{array}{cc} 0 & 1 \end{array} \right)$
and
${\bf r}_1 = \left( \begin{array}{cc} 1 & 1 \end{array} \right)$.
Let a bare ${\bf r}$ denote ${\bf r}_1$.
Define
$$L = \left( \begin{array}{cc} 1 & 0 \\ 1 & 1 \end{array} \right)
\ \ \text{and}
\ \ U = \left( \begin{array}{cc} 1 & 1 \\ 0 & 1 \end{array} \right).$$
Writing the binary expansion of $n$ as $b_m b_{m-1} \cdots b_1 b_0$,
let $T_i$ be $U$ or $L$ according to whether $b_i$ is 0 or 1.
\end{definition}

\begin{proposition}
For all $n>0$,
$${\bf r}_n = 
\left\{
\begin{array}{ll}
{\bf r}_{n'} \, U & \mbox{if $n$ is even,} \\
{\bf r}_{n'} \, L & \mbox{if $n$ is odd}
\end{array} \right.$$
so that ${\bf r}_n = {\bf r} \, T_{m-1} \cdots T_1 T_0$
(note that we exclude $T_m$).
\end{proposition}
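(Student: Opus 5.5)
The plan is to derive the one-step recurrence directly from Proposition~\ref{prop:recur}, and then unroll it by induction on the number of binary digits of $n$.

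For the one-step recurrence with $n \geq 2$, I compare entries using Proposition~\ref{prop:recur}. If $n$ is even, then ${\bf r}_{n'}\,U = (i_{n'},\, o_{n'})\,U = (i_{n'},\, i_{n'}+o_{n'})$. This equals $(i_n, o_n)$ by the even case of Proposition~\ref{prop:recur}. If $n$ is odd, then ${\bf r}_{n'}\,L = (i_{n'}+o_{n'},\, o_{n'})$, which equals $(i_n,o_n)$ by the odd case. The case $n=1$ is not covered by Proposition~\ref{prop:recur}, so I check it by hand: $n'=0$ and ${\bf r}_0\,L = (0,1)L = (1,1) = {\bf r}_1$, as required.

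For the product formula, I induct on $m = \lfloor \log_2 n\rfloor$. The key observation is that if $n = b_m b_{m-1}\cdots b_1 b_0$ in binary, then $n' = \lfloor n/2\rfloor$ has binary expansion $b_m b_{m-1} \cdots b_1$. That is, passing from $n$ to $n'$ deletes the last digit and shifts every remaining digit down one place. In the base case $m=0$ we have $n=1$, and the empty product gives ${\bf r}_1 = {\bf r}$, which holds by definition. For $m \geq 1$, the inductive hypothesis applied to $n'$, which has top index $m-1$ and digits $b_1,\dots,b_m$ in positions $0,\dots,m-1$, gives ${\bf r}_{n'} = {\bf r}\, T_{m-1}\cdots T_1$. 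Here the matrix attached to the digit of $n'$ in position $j$ is the matrix $T_{j+1}$ attached to $n$.

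By the one-step recurrence, ${\bf r}_n = {\bf r}_{n'}\, T_0$, because $T_0 = U$ exactly when $b_0 = 0$, i.e.\ when $n$ is even. Combining the two gives ${\bf r}_n = {\bf r}\,T_{m-1}\cdots T_1 T_0$. The leading digit $b_m$ never contributes a factor; it is absorbed into the base vector ${\bf r} = {\bf r}_1$.

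There is no real obstacle. The only points needing care are the separate verification of the $n=1$ case of the recurrence, and the bookkeeping of the index shift between the digits of $n'$ and those of $n$.
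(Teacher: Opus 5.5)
Your proposal is correct and matches the paper's proof, which consists of the single word ``Immediate.'' That proof is exactly this argument: rewrite Proposition~\ref{prop:recur} as right multiplication by $U$ or $L$, check $n=1$ by hand, and unroll along $n \mapsto \lfloor n/2 \rfloor$, with the index shift between the digits of $n'$ and of $n$ handled as you do.
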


\begin{proof}
Immediate.
\end{proof}

\begin{figure}[h!]
\begin{center}
\includegraphics[width=1.5in]{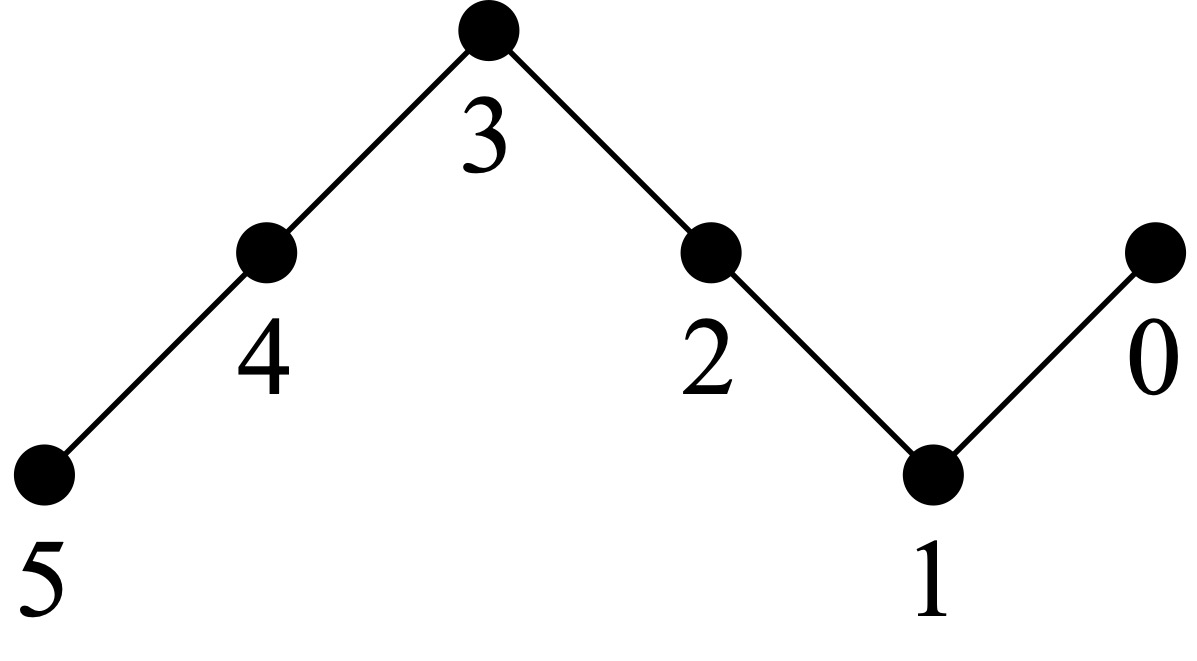}
\end{center} 
\caption{The poset $P_{57}=P_{111001_{{\rm two}}}$, aka the poset $S_{10/7}$.}
\label{fig:snake-fifty-seven}
\end{figure}

\begin{example}
Write the binary expansion of 57 as $b_5 b_4 b_3 b_2 b_1 b_0 = 111001$.
When the leading 1 is dropped we get 11001.
Replacing each 0 with $U$ and each 1 with $L$
we obtain the matrix product $LLUUL$ satisfying
$${\bf r}_1 \ L \ L \ U \ U \ L =
\left( \begin{array}{cc} 10 & 7 \end{array} \right).$$
Figure~\ref{fig:snake-fifty-seven} shows
the poset $P_{57}=P_{111001_{{\rm two}}}$, also known as $S_{10/7}$;
it has 17 filters, of which 10 contain 0 and 7 do not.
\end{example}

More generally, we can use the bits in the binary expansion of $n$
to find a product of matrices such that
multiplying that matrix on the left by the all-1's row vector of length two
gives a vector whose components
are respectively the numerator and denominator of
the $n$th entry in the Calkin-Wilf enumeration
of the positive rationals.

%

For all positive rational numbers $r$,
the snake $S_{1/r}$ can be obtained from the snake $S_r$
by replacing every upward step by a downward step and vice versa,
and the snake $S_{r+1}$ can be obtained from the snake $S_r$
by adding an upward step at the right.

\section{$q$-deformed positive rational numbers}
\label{sec:q}

\begin{definition}
Suppose $r$ is a positive rational number and $q$ is a positive real number.
Let $\cF_r$ be the set of filters in the snake poset $S_r$.
Define the {\em weight} $W_q(F)$ of a filter $F \in \cF_r$ to be $q^{|F|}$, 
let $Z = Z_{r,q} = \sum_{F \in \cF_r} W_q(F)$
(the partition function associated with the weight function $W_q$),
and let $\mu_{r,q}$ be the probability measure
that assigns $F \in \cF_r$ probability $W_q(F)/Z$.
Let $\llb r \rrb_q$ be
the probability that a $\mu_{r,q}$-random filter of $P_n$ includes 0 divided by
the probability that a $\mu_{r,q}$-random filter of $P_n$ omits 0.
(Note that setting $q=1$ assigns all filters weight 1
and so gives the uniform distribution studied in the previous section.)
\end{definition}

\begin{definition}
Let $i_n(q)$ (resp.\ $o_n(q)$) be 
the sum of the $q$-weights of the filters of $P_n$ 
that include (resp.\ omit) 0, so that for instance 
$i_0(q) = 0$, $o_0(q) = 1$, $i_1(q) = q$ and $o_1(q) = 1$.
\end{definition}

\begin{proposition}
Take $n \geq 2$ and let $n' = \lfloor n/2 \rfloor$. Then
$i_{n} = q i_{n'}$ and $o_{n} = i_{n'}+o_{n'}$ for even $n>1$ and
$i_{n} = q i_{n'} + q o_{n'}$ and $o_{n} = o_{n'}$ for odd $n>1$.
\label{prop:qrecur}
\end{proposition}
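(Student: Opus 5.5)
The plan is to reuse the beheading argument from the proof of Proposition~\ref{prop:recur}, now keeping track of the cardinality of each filter so that weights are recorded and not merely counted. As there, removing the head $0$ of $P_n$ (with $n \geq 2$) and downshifting the remaining labels gives an isomorphism with $P_{n'}$, where $n' = \lfloor n/2 \rfloor$. The relation between $0$ and $1$ in $P_n$ is governed by the bit $b_0$. If $n$ is even, then $b_0=0$ and $1 \gtrdot 0$. If $n$ is odd, then $b_0=1$ and $1 \lessdot 0$.

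The key steps are the same bijections as before. Every filter $F$ of $P_n$ has the form $F = \{0\} \cup (F'+1)$ or $F = F'+1$, where $F'$ is a filter of $P_{n'}$ and $F'+1$ denotes its upshift. The filter condition at the new head constrains only the pair $\{0,1\}$. The two parities then behave as follows.

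\begin{itemize}[leftmargin=*]
\item \emph{$n$ even ($1 \gtrdot 0$).} Including $0$ forces $1 \in F$, so $0 \in F'$. Omitting $0$ imposes no constraint on $F'$.
\item \emph{$n$ odd ($1 \lessdot 0$).} Including $0$ imposes no constraint on $F'$. Omitting $0$ forces $1 \notin F$, since otherwise the element $0 \gtrdot 1$ would have to lie in $F$. Hence $0 \notin F'$.
\end{itemize}

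The only new ingredient is the weight bookkeeping. Upshifting preserves cardinality, so $W_q(F'+1) = W_q(F')$. Adjoining $0$ multiplies the weight by $q$, so $W_q(\{0\}\cup(F'+1)) = q\,W_q(F')$. Summing over the bijections, for $n$ even we get
$$i_n(q) = q\, i_{n'}(q), \qquad o_n(q) = i_{n'}(q) + o_{n'}(q),$$
and for $n$ odd we get
$$i_n(q) = q\,\bigl(i_{n'}(q)+o_{n'}(q)\bigr), \qquad o_n(q) = o_{n'}(q).$$
These are exactly the claimed identities.

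There is no real obstacle here. The one point needing care is the filter condition at the new head. We must check two things: that $F$ is a filter exactly when $F'$ is a filter and the single covering relation between $0$ and $1$ is respected; and that the direction of that covering relation matches the parity convention in the definition of $P_n$. Since $P_n$ is a snake, no other relation involves $0$, so this check is local. A possible subtlety arises when $n' = 1$, where $P_{n'}$ is a one-point poset. The base values $i_1(q)=q$ and $o_1(q)=1$ make the argument go through unchanged.
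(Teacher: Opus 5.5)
Your proposal is correct and follows the paper's approach: the paper's proof just refers back to the beheading/upshifting bijections from the proof of Proposition~\ref{prop:recur}. You carry out those bijections explicitly, adding the single new observation that upshifting preserves cardinality while adjoining $0$ multiplies the weight by $q$.
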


\begin{proof}
Straightforward (see the proof of Proposition~\ref{prop:recur}).
\end{proof}

\begin{proposition}
For every nonnegative rational number $r$,
$\llb r+1 \rrb_q = q \: \llb r \rrb_q \: + \: q$.
\label{prop:add}
\end{proposition}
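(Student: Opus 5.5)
The plan is to use the remark at the end of Section~\ref{sec:snakes}: the snake $S_{r+1}$ is obtained from $S_r$ by adding an upward step at the right. In terms of the indexing, if $S_r = P_n$ with $r = \fusc(n)/\fusc(n+1)$, then $S_{r+1} = P_{2n+1}$. I will check this index identification directly from the recurrence for $\fusc$:
$$\fusc(2n+1)/\fusc(2n+2) = (\fusc(n)+\fusc(n+1))/\fusc(n+1) = r+1.$$
By the uniqueness in Proposition~\ref{prop:cw}, this shows $S_{r+1}=P_{2n+1}$. This holds also for $r=0$ ($n=0$, giving $P_1$), though there the definitions of $P_0,P_1$ are given by hand. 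So I treat $r=0$ separately: $\llb 0 \rrb_q = 0/1 = 0$ and $\llb 1 \rrb_q = q/1 = q$, which agrees with $q\cdot 0 + q$.

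For $r>0$, so $n\ge 1$ and $2n+1\ge 3$, I apply Proposition~\ref{prop:qrecur} in the odd case with $n' = n$. This gives $i_{2n+1}(q) = q\,i_n(q) + q\,o_n(q)$ and $o_{2n+1}(q) = o_n(q)$.

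Next I use that $\llb r \rrb_q$ is the ratio of the probabilities of including and omitting $0$. The partition function $Z$ cancels, so $\llb r \rrb_q = i_n(q)/o_n(q)$. Therefore
$$\llb r+1 \rrb_q = \frac{q\,i_n(q)+q\,o_n(q)}{o_n(q)} = q\,\llb r \rrb_q + q.$$
Here $o_n(q)>0$ because the empty filter always omits $0$.

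There is no real obstacle. The only point needing care is the identification $S_{r+1}=P_{2n+1}$ together with the $r=0$ boundary case. The combinatorial content is that adjoining a new head $0$ covered by the old head makes $0$ free to be included whenever the rest is a filter; this is exactly the odd case of Proposition~\ref{prop:qrecur}.
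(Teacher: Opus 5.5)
Your proof is correct and is essentially the paper's argument: identify $S_{r+1}=P_{2n+1}$ when $S_r=P_n$, apply the odd case of Proposition~\ref{prop:qrecur}, and divide. Your separate check of $r=0$ (where $2n+1=1$ lies outside the stated range $n\ge 2$) is extra care the paper skips. One small slip in your closing remark: since $b_0=1$ gives $1 \lessdot 0$, the new head $0$ \emph{covers} the old head rather than being covered by it, and that is exactly why $0$ can always be included freely.
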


\begin{proof}
Apply Proposition~\ref{prop:qrecur}
with $S_r = P_{n'}$, $S_{r+1} = P_{n}$, and $n=2n'+1$.
We have $i_{n} = q i_{n'} + q o_{n'}$ and $o_{n} = o_{n'}$,
so that $i_n(q)/o_n(q) = q i_{n'}(q)/o_{n'}(q) \: + \: q$.
The claim follows.
\end{proof}

\begin{proposition}
For every positive rational number $r$,
$\llb 1/r \rrb_q = 1 / \llb r \rrb_{1/q}$.
\label{prop:flip}
\end{proposition}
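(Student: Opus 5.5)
The plan is to work directly with the filters, using the remark at the end of Section~\ref{sec:snakes}: the snake $S_{1/r}$ is obtained from $S_r$ by reversing every step, i.e.\ $S_{1/r}$ is the dual poset $S_r^{*}$. (If one prefers, this also follows from the continued-fraction description in Proposition~\ref{prop:cw}.) Passing from $r$ to $1/r$ toggles $c_1$ between $0$ and a positive value. This flips every non-leading bit, so $S_{1/r}$ has the same underlying set $\{0,1,\dots,m\}$ as $S_r$ with every covering relation reversed. I will take this identification as the starting point and check it carefully, including the boundary cases such as $r=1$.

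The key step is complementation. If $F$ is a filter of $S_r$, then $S_r\setminus F$ is a down-set of $S_r$, hence a filter of the dual poset $S_r^{*}=S_{1/r}$. The map $F\mapsto F^{c}$ is therefore a bijection $\cF_r\to\cF_{1/r}$. It sends filters containing $0$ to filters omitting $0$, and vice versa. It also transforms weights by $q^{|F^c|}=q^{m+1}\,(1/q)^{|F|}$. Writing $N=m+1=|S_r|$, we get
$$\sum_{G\in\cF_{1/r},\,0\in G} q^{|G|} = q^{N}\sum_{F\in\cF_r,\,0\notin F} q^{-|F|},\qquad \sum_{G\in\cF_{1/r},\,0\notin G} q^{|G|} = q^{N}\sum_{F\in\cF_r,\,0\in F} q^{-|F|}.$$

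Taking the ratio, the factor $q^N$ cancels. This leaves
$$\llb 1/r\rrb_q=\frac{o_n(1/q)}{i_n(1/q)}=1/\llb r\rrb_{1/q},$$
where $S_r=P_n$ and the right-hand sums are exactly $o_n(1/q)$ and $i_n(1/q)$. Positivity of these sums, for $r>0$, guarantees that nothing is undefined.

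The main obstacle is justifying that $S_{1/r}$ really is the dual of $S_r$ on the same index set. I will check this by comparing the binary encodings. If $S_r=P_n$ with bits $b_{m-1}\cdots b_0$ below the leading $1$, the claim is that $S_{1/r}=P_{n^*}$, where $n^*=2^{m+1}-1-(n-2^m)$, i.e.\ the non-leading bits are complemented. I will verify this through Proposition~\ref{prop:recur} rather than continued fractions. Complementing bits swaps the roles of the even and odd cases there, which sends $(i,o)$ to $(o,i)$ by induction on $m$. Hence $\fusc(n^*)/\fusc(n^*+1)=o_n/i_n=1/r$, and uniqueness in Proposition~\ref{prop:cw} identifies $S_{1/r}$ with $P_{n^*}$. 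The base case $n=1$ ($r=1$, self-dual) is trivial.
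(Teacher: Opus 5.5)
Your proof is correct and is essentially the paper's argument: complementation is a bijection between filters of $S_r$ and filters of the dual poset $S_r^{*}=S_{1/r}$ that swaps inclusion and omission of $0$ and satisfies $|F|+|F^{c}|=m+1$, so the factors of $q^{m+1}$ cancel in the odds ratio. The only addition is your check that $S_{1/r}$ really is $S_r^{*}$ on the same index set, via bit-complementation, Proposition~\ref{prop:recur}, and the uniqueness in Proposition~\ref{prop:cw}; the paper simply takes this from the remark at the end of Section~\ref{sec:snakes}.
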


\begin{proof}
Suppose $S_r = P_n$ and $S_{1/r} = P_{\hat{n}}$,
and let $m+1$ be the number of elements of the poset $S_r$ as earlier,
which is also the number of elements of the poset $S_{1/r}$.
To every filter $F$ of $S_r$ corresponds a filter $F'$ 
of the dual poset $S_r^* = S_{1/r}$
obtained by flipping $F$ upside down and taking the complement.
If we use the same labels $0,1,\dots,m$ for
the elements of $F$ and $F'$, we have $F' = \{0,1,\dots,m\} \setminus F$
so that $|F'| = m+1 - |F|$. It follows that 
the $q$-weight of $F$ times the $q$-weight of $F'$ equals $q^{m+1}$;
equivalently, the $q$-weight of $F$ divided by the $q^{-1}$-weight of $F'$
equals $q^{m+1}$.
When $F$ includes 0, $F'$ omits 0, and vice versa,
so $i_n(q) / o_{\hat{n}}(1/q) = q^{m+1}$ and $o_n(q) / i_{\hat{n}}(1/q) = q^{m+1}$.
Therefore
$$\llb r \rrb_q \llb 1/r \rrb_{1/q} =
  \left( \frac{i_n(q)}{o_n(q)} \right) 
  \left( \frac{i_{\hat{n}}(1/q)}{o_{\hat{n}}(1/q)} \right) =
  \left( \frac{i_n(q)}{o_{\hat{n}}(1/q)} \right) 
  \left( \frac{i_{\hat{n}}(1/q)}{o_n(q)} \right) = q^{m+1}/q^{m+1} = 1$$
or (replacing $r$ by $1/r$) $\llb 1/r \rrb_q \llb r \rrb_{1/q} = 1$.
\end{proof}

\begin{proposition}
$\llb r \rrb_q = q \: [r]_q$ for all nonnegative rational numbers $r$.
\label{prop:q}
\end{proposition}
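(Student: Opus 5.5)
The plan is to prove the identity by the same structural induction that generates all positive rationals, using the two recursions already established. Morier-Genoud and Ovsienko's $q$-rationals are characterized by $[0]_q=0$, $[1]_q = 1$ and the two functional equations
$$[x+1]_q = q\,[x]_q + 1, \qquad [1/x]_q = \frac{1}{q\,[x]_{1/q}}$$
(equivalently $[-1/x]_q = -q^{-1}/[x]_q$ combined with the symmetry $[-x]_q = -q^{-1}[x]_{q^{-1}}$). These two equations determine $[r]_q$ on all nonnegative rationals, since every positive rational is reached from $0$ or $1$ by finitely many operations $x\mapsto x+1$ and $x \mapsto 1/x$ (Euclid's algorithm, i.e.\ the continued fraction expansion used in Proposition~\ref{prop:cw}). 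So I will show that $f(r) := \llb r \rrb_q / q$ obeys the same base cases and the same two functional equations.

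For the base case, $S_0 = P_0$ is empty, so $i_0(q)=0$ and $o_0(q)=1$, giving $\llb 0\rrb_q = 0 = q[0]_q$. For the translation step, Proposition~\ref{prop:add} gives $\llb r+1\rrb_q = q\llb r\rrb_q + q$. Dividing by $q$ yields $f(r+1) = q f(r) + 1$, which is exactly the translation rule for $[\cdot]_q$.

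For the inversion step, Proposition~\ref{prop:flip} gives $\llb 1/r\rrb_q = 1/\llb r\rrb_{1/q}$. Writing $\llb r\rrb_{1/q} = q^{-1} f_{1/q}(r)$, where $f_{1/q}(r) = q\,\llb r\rrb_{1/q}$, we get
$$f_q(1/r) = \frac{1}{q\cdot q^{-1} f_{1/q}(r)} = \frac{1}{f_{1/q}(r)}.$$
Assuming inductively that $f_{1/q}(r) = [r]_{1/q}$ holds as an identity of rational functions in $q$, this becomes $f_q(1/r) = 1/[r]_{1/q}$. It then remains to match this with Morier-Genoud–Ovsienko's inversion formula. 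The induction should be organized on the length of the Euclidean algorithm (sum of continued fraction coefficients), and the statement should be proved for all $q>0$ simultaneously, so that the hypothesis is available at $1/q$ as well as at $q$.

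The main obstacle is pinning down the correct MO inversion rule with its exact power of $q$. Their definition gives $[-1/x]_q = -q^{-1}/[x]_q$ and $[-x]_q = -q^{-1}[x]_{q^{-1}}$, and composing these yields $[1/x]_q = 1/[x]_{q^{-1}}$. With that rule, $f(1/r) = 1/[r]_{1/q} = [1/r]_q$, which closes the induction.

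If the normalization turns out to be off by a power of $q$, I will instead fall back on a direct check. I would compare the transfer-matrix recursion of Proposition~\ref{prop:qrecur}, namely the $q$-analogues of $U$ and $L$ acting on $(i_n(q), o_n(q))$, with MO's matrices $\begin{pmatrix} q & 1\\ 0 & 1\end{pmatrix}$ and $\begin{pmatrix} q & 0 \\ q & 1\end{pmatrix}$. Verifying small cases, such as $r = 1/2$ and $r = 10/7$ from the first Example, would then fix any stray factor of $q$.
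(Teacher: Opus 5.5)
Your proof is correct and is essentially the paper's proof. It uses the base case $r=0$, then induction on continued-fraction length via Propositions~\ref{prop:add} and~\ref{prop:flip} together with the Morier-Genoud--Ovsienko relations $[r+1]_q = q\,[r]_q+1$ and $[1/r]_q = 1/[r]_{1/q}$, with the identity held for all $q>0$ at once so that the hypothesis is available at $1/q$. The only flaw is the rule $[1/x]_q = 1/(q\,[x]_{1/q})$ in your first paragraph: it is off by a factor of $q$ and contradicts the correct rule $[1/x]_q = 1/[x]_{1/q}$ you derive at the end, so you should delete it, and the fallback paragraph along with it.
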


\begin{proof}
We know that this is true for $r=0$.
The formulas $\llb r+1 \rrb_q = q \: \llb r \rrb_q \: + \: q$
and $\llb 1/r \rrb_q = 1 / \llb r \rrb_{1/q}$ 
proved in Propositions~\ref{prop:add} and~\ref{prop:flip}
also apply to the variant $q$-deformed rational numbers $q \: [r]_q$
as consequences of the recurrence relations
$[r+1]_q = q \: [r]_q + 1$ and $[1/r]_q = 1/[r]_{1/q}$
of Morier-Genoud and Ovsienko.
Furthermore, each value of $[r]_q$ with $r$ positive and rational
can be derived from $[0]_q = 0$ by means of these two relations;
this is essentially just the continued fraction algorithm. 
Hence it follows by induction on the continued fraction complexity of $r$
that $\llb r \rrb_q = q \: [r]_q$ holds for all nonnegative rational $r$.
\end{proof}

\begin{definition}
For $n \geq 0$ put 
$${\bf r}_n (q) = 
\left( \begin{array}{cc} i_n (q) & o_n (q) \end{array} \right)$$
so that in particular
${\bf r}_0 (q) = \left( \begin{array}{cc} 0 & 1 \end{array} \right)$
and
${\bf r}_1 (q) = \left( \begin{array}{cc} q & 1 \end{array} \right)$.
Let a bare ${\bf r}$ denote ${\bf r}_1 (q)$.
Put $$L(q) = \left( \begin{array}{cc} q & 0 \\ q & 1 \end{array} \right)
\ \ \text{and}
\ \ U(q) = \left( \begin{array}{cc} q & 1 \\ 0 & 1 \end{array} \right).$$
Writing the binary expansion of $n$ as $b_m b_{m-1} \cdots b_1 b_0$,
let $T_i=T_i(q)$ be $U(q)$ or $L(q)$ according to whether $b_i$ is 0 or 1.
\end{definition}

\begin{proposition}
For all $n>0$,
$${\bf r}_n (q)= 
\left\{ \begin{array}{ll}
{\bf r}_{n'} (q) \, U(q) & \mbox{if $n$ is even,} \\
{\bf r}_{n'} (q) \, L(q) & \mbox{if $n$ is odd}
\end{array} \right.$$
so that 
${\bf r}_n (q) = {\bf r} \: T_{m-1} \cdots T_1 \, T_0$
(note that we exclude $T_m$).
\end{proposition}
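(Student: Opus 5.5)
The plan is to derive the one-step matrix identity directly from Proposition~\ref{prop:qrecur}, and then obtain the product formula by induction on $m=\lfloor \log_2 n\rfloor$.

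\textbf{The base case $n=1$.} Proposition~\ref{prop:qrecur} is stated only for $n\ge 2$, so this case is treated by hand. We have $n'=0$ and ${\bf r}_0(q)=(0\ \ 1)$, so
\[
{\bf r}_0(q)\,L(q)=(0\cdot q+1\cdot q \ \ \ 0\cdot 0+1\cdot 1)=(q\ \ 1)={\bf r}_1(q).
\]
This is the odd case of the claim. For the product formula, $n=1$ has $m=0$, so the product is empty and the formula reads ${\bf r}_1(q)={\bf r}$, which holds by definition.

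\textbf{The recursion for $n\ge 2$.} Write ${\bf r}_{n'}(q)=(i_{n'}\ \ o_{n'})$.
\begin{itemize}
\item For $n$ even, multiplying on the right by $U(q)$ gives $(q\,i_{n'}\ \ \ i_{n'}+o_{n'})$. This is exactly $(i_n\ \ o_n)$ by the even case of Proposition~\ref{prop:qrecur}.
\item For $n$ odd, multiplying on the right by $L(q)$ gives $(q\,i_{n'}+q\,o_{n'}\ \ \ o_{n'})$. This matches the odd case of Proposition~\ref{prop:qrecur}.
\end{itemize}
So the displayed recursion holds for all $n>0$.

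\textbf{The product formula.} Proceed by induction on $m$; the base case $m=0$ was handled above. For $m\ge 1$, the binary expansion of $n'=\lfloor n/2\rfloor$ is $b_m b_{m-1}\cdots b_1$. Its length exponent is therefore $m-1$, and its bits, reindexed, are $b'_j=b_{j+1}$. The inductive hypothesis gives
\[
{\bf r}_{n'}(q)={\bf r}\,T_{m-1}\cdots T_1,
\]
where each factor is indexed by the bits of $n$. The last bit of $n$ is $b_0$, which is $0$ for $n$ even and $1$ for $n$ odd. Hence the matrix $U(q)$ or $L(q)$ applied in the recursion is exactly $T_0$, and
\[
{\bf r}_n(q)={\bf r}\,T_{m-1}\cdots T_1\,T_0.
\]

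The only place needing care is this bookkeeping: checking that the bit indices shift consistently under $n\mapsto n'$, and that the leading bit $b_m$ is never converted into a matrix. It is absorbed into the starting vector ${\bf r}={\bf r}_1(q)$, because the inductive chain $n\mapsto n'$ terminates at $n=1$, not at $n=0$.
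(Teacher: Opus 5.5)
Your proposal is correct and is exactly the verification the paper has in mind when it calls the result ``Immediate'': right-multiplying $(i_{n'}\ \ o_{n'})$ by $U(q)$ or $L(q)$ reproduces the two cases of Proposition~\ref{prop:qrecur}, and unrolling $n\mapsto n'$ down to $n=1$ with the bit shift $b'_j=b_{j+1}$ gives the product formula. Your separate check of $n=1$, which Proposition~\ref{prop:qrecur} does not cover, is a detail the paper leaves implicit.
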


\begin{proof}
Immediate.
\end{proof}

Up till now we have viewed the matrices $L(q)$ and $U(q)$ 
as tools for constructing larger snake posets from smaller ones.
At this point we adopt a different perspective
and view them as transfer matrices
the carry state information from left to right within a fixed snake matrix.
For instance, we can write the partition function for filters of $S_r$ as
$Z(r,q) = {\bf r} \: T_{m-1} \cdots T_1 \, T_0 \, {\bf c}$
where ${\bf c}$ is the column vector consisting of two 1's.
(Technically this product gives a 1-by-1 matrix, not a number.
One can fix this by introducing a single dot-product at the left or right,
but I prefer to keep the formula symmetrical and tacitly employ 
the natural identification of 1-by-1 matrices with numbers.)

\begin{example}
To compute $\llb 10/7 \rrb_q$, first determine
that the Calkin-Wilf index of 10/7 is 57
as we did in Section~\ref{sec:snakes}. 
Then write 57 in binary as 111001
and drop the leading 1 to get 11001.
Then turn that string of bits into a string of $L$'s and $U$'s
and compute $${\bf r}(q) \, L (q) \, L (q) \, U (q) \, U (q) \, L (q) = 
\left( \begin{array}{cc} q+q^2+2q^3+3q^4+2q^5+q^6 & 1+q+2q^2+2q^3+q^4 
\end{array} \right).$$ 
Lastly, divide the first element of the vector by the second element
to get $\llb \frac{10}{7} \rrb_q$.
\label{ex:tsq}
\end{example}

The factorization
$Z(r,q) = ({\bf r} \, T_{m-1} \cdots T_{k}) (T_{k-1} \cdots T_0 \, {\bf c})$
with $k$ between 1 and $m-1$ admits an interpretation
in which the row-vector ${\bf r} \: T_{m-1} \cdots T_{k}$,
viewed as the output of the left sub-snake,
becomes an input for the right sub-snake corresponding to
the product $T_{k-1} \cdots T_0 \, {\bf c}$.
That is, a snake can be viewed as a vector-processing organism 
that accepts input at its tail, 
transforms that information as it propagates through the snake, 
and produces output at its head.
Thus, two snakes can be concatenated into a single snake, 
or a single snake can be decomposed into two sub-snakes.

Note that if you wish to compare this arrangement with that of actual snakes,
you need to think in terms of the flow of sensory information (as opposed to 
the flow of nutrients which normally goes in the opposite direction).

``Pinned'' versions of the $T_i$-matrices can be used
to compute probabilities of events pertaining to
a random filter of a snake graph. Define projection matrices
$$I^{{\rm in}}  = \left(\begin{array}{cc} 1 & 0 \\ 0 & 0 \end{array} \right), \ 
\ I^{{\rm out}} = \left(\begin{array}{cc} 0 & 0 \\ 0 & 1 \end{array} \right)$$
and put
$$U^{{\rm in}}(q)  = U(q) \ I^{{\rm in}}  
= \left(\begin{array}{cc} q & 0 \\ 0 & 0 \end{array} \right), 
\ \ \ \ U^{{\rm out}}(q) = U(q) \ I^{{\rm out}} 
= \left(\begin{array}{cc} 0 & 1 \\ 0 & 1 \end{array} \right),$$
$$L^{{\rm in}}(q)  = L(q) \ I^{{\rm in}}  
= \left(\begin{array}{cc} q & 0 \\ q & 0 \end{array} \right), 
\ \ \ \ L^{{\rm out}}(q) = L(q) I^{{\rm out}} 
= \left(\begin{array}{cc} 0 & 0 \\ 0 & 1 \end{array} \right).$$
To find the sum of the weights of all the filters of a snake graph
that match a certain pattern of inclusions and omissions,
replace each relevant factor in the matrix-product
by the appropriate pinned matrix.

\begin{example}
The sum of the weights of the filters of $P_{111001_{{\rm two}}} = S_{10/7}$
that include the vertex 3 and omit the vertex 2
is (the sole entry in) the 1-by-1 matrix
$${\bf r} \, L \, L_{{\rm in}} \, 
U_{{\rm out}} \, U \, L \, {\bf c} = 
\left( q+2q^2+2q^3+4q^4 \right) . $$
One can divide this weight-sum by the partition function
(given by the unpinned version of the exact same product)
to obtain the probability of the event in question.
\end{example}


%
%
%

\section{$q$-deformed positive irrational numbers}
\label{sec:irrational}

In this section we extend the construction of the preceding section
from finite snake posets to infinite snake posets.
Throughout, $q>0$ is fixed.

\begin{definition}
Let $x>0$ be irrational, and write
\[ x = [c_1;c_2,c_3,\dots] := c_1+\frac{1}{c_2+\frac{1}{c_3+\cdots}} \]
with $c_1\ge0$ and $c_i\ge1$ for $i>1$.
Let $S_x$ be the infinite snake graph that read from right to left
consists of $c_1$ downward (leftward) steps, followed by $c_2$ upward steps,
followed by $c_3$ downward steps, followed by $c_4$ upward steps,
followed by $c_5$ downward steps, and so on. 
\end{definition}

\begin{example}
The continued fraction expansion of $\sqrt{2}$ has
$c_1 = 1$ and $c_i = 2$ for all $i > 1$.
This expansion is associated with
the snake poset shown in Figure~\ref{fig:sqrt},
which is associated with the leftward-infinite bit-string
$\cdots110011001$.

\begin{figure}[h!]
\begin{center}
\includegraphics[width=2.5in]{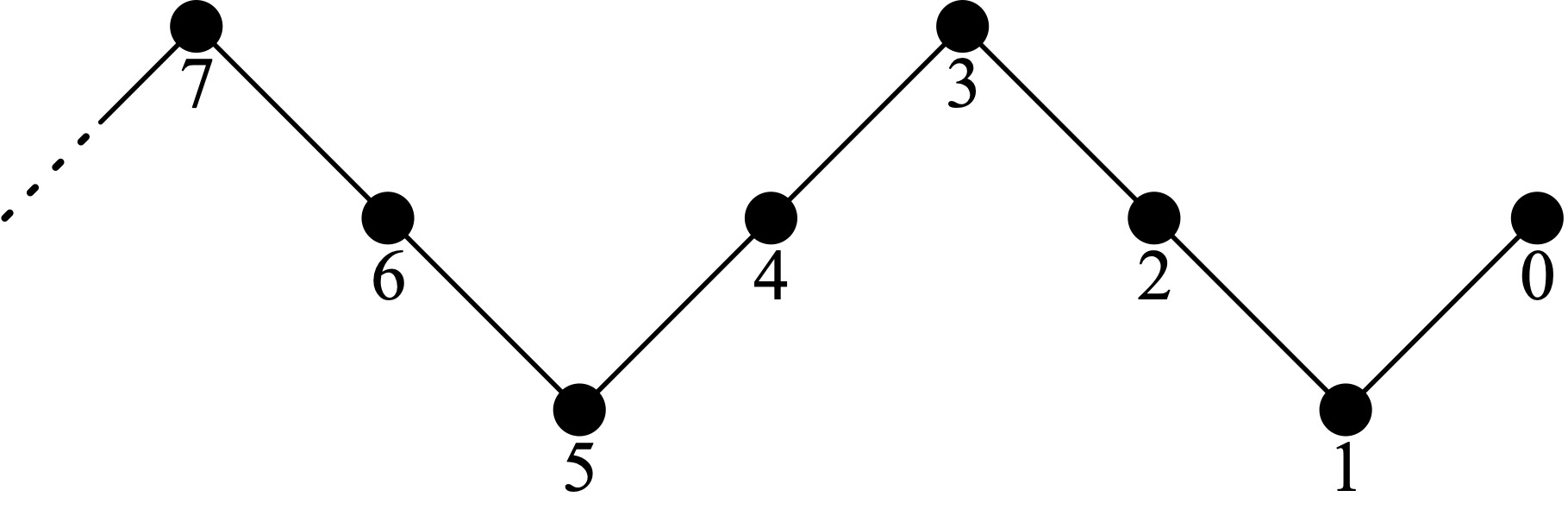}
\end{center}
\caption{The snake poset for the square root of 2.}
\label{fig:sqrt}
\end{figure}
\end{example}

Let $\cF_x$ be the set of filters of $S_x$.
Since $x$ is irrational, the continued fraction expansion of $x$
has infinitely many upward steps and infinitely many downward steps,
so $S_x$ has infinitely many peaks 
(vertices $i$ such that $i\!+\!1 \: \lessdot \: i \: \gtrdot \: i\!-\!1$).
Every subset of the set of peaks is a filter, so $\cF_x$ is uncountable.
Hence we cannot define a probability measure
$\mu_{x,q}$ on $\cF_x$ in the direct way that we used when $\cF_x$ was finite.
Instead, we take a limit of measures $\mu_{\widetilde{x},q}$
for rational numbers $\tilde{x}$ approaching $x$.
To show that the limit exists, it suffices to show that 
the probabilities of events determined by 
only finitely many vertices of $S_x$ (say vertices $0,\dots,k$)
are barely influenced by how the snake is continued 
beyond a distant vertex $\ell$, 
and that this influence tends to zero as $\ell \rightarrow \infty$
(for each fixed $k$).
Since any sequence of rationals approaching an irrational number 
has eventually constant initial partial quotients, 
and since the partial quotients determine the shape of the snake,
this will imply that any sequence of rationals approaching $x$ will do.

\begin{definition}
For $k \geq 0$, an \emph{atomic cylinder event on $0,\dots,k$}
is the event that a random filter agrees with a prescribed filter
in the induced subposet on the vertices $0,\dots,k$.
\end{definition}

\noindent

For each $i \ge 0$, let the transfer matrix $T_i$ be defined 
exactly as in Section~\ref{sec:q}; that is, $T_i$ equals $L(q)$ or $U(q)$ 
according to whether the rightward step 
from $i+1$ to $i$ is an upward step or a downward step.
For integers $a > b$, define the matrix ${}_aT_b$
(pronounced ``$T$ from $a$ to $b$'') as
\[ {}_aT_b = T_{a-1}T_{a-2}\cdots T_{b+1}T_b \]
and put ${}_aT_a=I$. Thus ${}_aT_b$
transfers state information from vertex $a$ to vertex $b$.
In this notation, $T_i = {}_{i+1}T_i$.
The composition rule ${}_aT_b\,{}_bT_c = {}_aT_c$
is immediate whenever $a \ge b \ge c$.
If we want to be maximally explicit
about the vertices and edges that information is being passed through,
we can write ${\bf r}$ as ${\bf r}_m$ 
and write ${\bf c}$ as ${}_0 {\bf c}$,
enabling us to rewrite the partition function product 
$Z(\widetilde{x},q) = {\bf r} \, T_{m-1} \cdots T_0 \, {\bf c}$ as 
$$Z(\widetilde{x},q) = {\bf r}_m \  {}_{m} T_{m-1} \  \cdots 
\  {}_1 T_0 \  {}_0 {\bf c}.$$

Choose integers $k$ and $\ell>k$ and $m>\ell$,
and let $S=S_{\widetilde{x}}$ be any finite snake on the vertices
$0,\dots,m$ that agrees with $S_x$ on the vertices $0,\dots,\ell$.
The vertices $0,\dots,k$ form the {\em proximal block},
the vertices $k+1,\dots,\ell$ form the {\em medial block},
and the vertices $\ell+1,\dots,m$ form the {\em distal block} or {\em tail}.
By the discussion that followed Example~\ref{ex:tsq},
the tail may be viewed as a vector-processing gadget
whose output is a row vector.
Denote this vector by ${}_m{\bf r}_{\ell}$.

Fix an atomic cylinder event $E$ on the proximal block.
We want to compute the effect of the tail of $S_{\tilde{x}}$
on the probability $\mu_{\tilde{x},q}(E)$.
To avoid the normalizing constant $Z_{\tilde{x},q}$
we introduce a second atomic cylinder event $E'$
and study $\mu_{\tilde{x},q}(E)/\mu_{\tilde{x},q}(E')$.

The multiplicative contribution of the proximal block
to $\mu_{\tilde{x},q}(E)$ is described by the column vector
$ {\bf s}_E = {}_kT_0^{\,E}\,{\bf c}_0 $
obtained by applying the appropriate
pinned transfer matrices to ${\bf c}_0$.
Likewise put $ {\bf s}_{E'} = {}_kT_0^{\,E'}\,{\bf c}_0 $.

The total $q$-weight of all filters satisfying $E$ is therefore
\[ Z(E) = {}_m{\bf r}_{\ell}\, {}_{\ell}T_k\, {\bf s}_E, \]
and similarly
\[ Z(E') = {}_m{\bf r}_{\ell}\, {}_{\ell}T_k\, {\bf s}_{E'}. \]
Hence
\[ \frac{\mu_{\tilde{x},q}(E)} {\mu_{\tilde{x},q}(E')} =
\frac{ {}_m{\bf r}_{\ell}\, {}_{\ell}T_k\, {\bf s}_E }
{ {}_m{\bf r}_{\ell}\, {}_{\ell}T_k\, {\bf s}_{E'} }.  \]

The vector ${}_m{\bf r}_{\ell}$ depends on the chosen continuation
of the snake beyond vertex $\ell$ (that is, on the tail).
However, the formula above uses the tail only via
the row vector ${}_m{\bf r}_{\ell}$.
Thus, in order to compare different tails,
it suffices to study the effect of multiplying arbitrary positive
row vectors by the fixed matrix ${}_{\ell}T_k$.

Replace ${}_m{\bf r}_{\ell}$ by an arbitrary positive row vector
${\bf u} = ( \ u_{\rm in} \ u_{\rm out} \ )$.
We wish to study the ratio
\[ R_{\ell,k}({\bf u}) =
\frac{ {\bf u}\ {}_{\ell}T_k\ {\bf s}_E }
{ {\bf u}\ {}_{\ell}T_k\ {\bf s}_{E'} }.  \]
Note that the ratio is unchanged if \({\bf u}\) is multiplied by 
a positive scalar; hence all that matters is the projective class 
of \({\bf u}\) (that is, $u_{\rm in} : u_{\rm out}$).
We call $u_{\rm in} : u_{\rm out}$
the {\em projective coordinate} of $\bf u$.
The row vector
${\bf u}' = {\bf u}\ {}_{\ell}T_k$ has its own projective coordinate
$u'_{\rm in} : u'_{\rm out}$.  The dependence of
\(R_{\ell,k}({\bf u})\) on the tail is controlled by 
how much the map ${\bf u}\mapsto {\bf u}'$
can change projective coordinates.
We shall show that, as \(\ell\) increases with \(k\) fixed, the possible
projective values of \( {\bf u}\ {}_{\ell}T_k \)
for positive row vectors \({\bf u}\) occupy a smaller and smaller interval.

\begin{definition}
For positive row vectors \[ {\bf u} =
(u_{\rm in}\ u_{\rm out}), \qquad {\bf v}=(v_{\rm in}\ v_{\rm out}), \]
define their projective distance by
\[ d({\bf u},{\bf v}) = 
\left| \log\frac{u_{\rm in}/u_{\rm out}} {v_{\rm in}/v_{\rm out}} \right|. \]
This is Hilbert's projective metric on the positive cone of row vectors.
\end{definition}

The effect of multiplying a row vector by $L(q)$, $U(q)$, or
$L(q)U(q)$ is particularly simple.

\begin{lemma}
Let ${\bf u}$ and ${\bf v}$ be positive row vectors.  Right multiplication
by $L(q)$ or by $U(q)$ cannot increase their projective distance, while
right multiplication by $L(q)U(q)$ decreases their projective distance by
a factor of at most
\[ \kappa(q)=\frac{1}{\bigl(\sqrt q+\sqrt{q+1}\bigr)^2}<1 .  \]
\end{lemma}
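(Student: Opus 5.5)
The plan is to reduce everything to a one-variable computation on projective coordinates. For a positive row vector ${\bf u}=(u_{\rm in}\ u_{\rm out})$, put $t=u_{\rm in}/u_{\rm out}>0$ and $s=\log t$. Then $d({\bf u},{\bf v})=|s_{\bf u}-s_{\bf v}|$, so it suffices to show that the induced maps on $s$ are $1$-Lipschitz (for $L(q)$ and $U(q)$) and $\kappa(q)$-Lipschitz (for $L(q)U(q)$). For each map I will bound the derivative and apply the mean value theorem on the segment between $s_{\bf u}$ and $s_{\bf v}$.

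\emph{Step 1: the induced maps.} Right multiplication gives
\[
{\bf u}L(q)=(q u_{\rm in}+q u_{\rm out}\ \ u_{\rm out})
\quad\text{and}\quad
{\bf u}U(q)=(q u_{\rm in}\ \ u_{\rm in}+u_{\rm out}).
\]
So on $t$ the maps are $\lambda(t)=q(t+1)$ and $\upsilon(t)=qt/(t+1)$. Both matrices have nonnegative entries and send positive vectors to positive vectors, so the induced maps are well defined.

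\emph{Step 2: derivatives in log coordinates.} Writing $t=e^s$, we get
\[
\frac{d}{ds}\log\lambda(e^s)=\frac{t}{t+1}\in(0,1)
\quad\text{and}\quad
\frac{d}{ds}\log\upsilon(e^s)=1-\frac{t}{t+1}=\frac{1}{t+1}\in(0,1).
\]
By the mean value theorem, neither multiplication increases $d$. This proves the first assertion.

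\emph{Step 3: the product $L(q)U(q)$.} This map is $\upsilon\circ\lambda$. By the chain rule, its log-derivative is the product of the log-derivatives, with the second evaluated at $\lambda(t)=q(t+1)$:
\[
f(t)=\frac{t}{t+1}\cdot\frac{1}{1+q(t+1)}
=\frac{1}{\,qt+(1+2q)+(1+q)/t\,}.
\]
The denominator is minimized by AM--GM at $t=\sqrt{(1+q)/q}$. Its minimum value is
\[
1+2q+2\sqrt{q(q+1)}=\bigl(\sqrt q+\sqrt{q+1}\bigr)^2 .
\]
Hence $0<f(t)\le\kappa(q)$ for all $t>0$. The mean value theorem then gives $d({\bf u}LU,{\bf v}LU)\le\kappa(q)\,d({\bf u},{\bf v})$, which is the contraction statement. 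Clearly $\kappa(q)<1$, since $\sqrt{q+1}>1$.

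The only step needing care is Step 3. There one must compose in the right order: row vectors are multiplied on the right, so $L$ acts first. One must also evaluate the second derivative factor at the image point rather than at $t$. Otherwise the computation is routine. As a cross-check, Birkhoff's contraction coefficient $\tanh(\Delta/4)$ for the positive matrix $L(q)U(q)$ should reproduce the same constant.
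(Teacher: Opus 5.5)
Your proposal is correct and follows essentially the same route as the paper. Both pass to logarithmic projective coordinates, compute the log-derivatives $t/(t+1)$, $1/(t+1)$ and $t/\bigl((t+1)(qt+q+1)\bigr)$, and bound the last by $\kappa(q)$ via its maximum at $t=\sqrt{(q+1)/q}$. The differences are cosmetic: you obtain the $L(q)U(q)$ derivative by the chain rule rather than from the explicit map $t\mapsto q^2(t+1)/(qt+q+1)$, and you spell out the mean value theorem step that the paper leaves implicit.
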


\begin{proof}
Write a positive row vector as \( {\bf u}=(u_{\rm in}\ u_{\rm out}) \)
and use \( t={u_{\rm in}}/{u_{\rm out}} \)
as its projective coordinate.  The projective distance between two
positive row vectors is the absolute value of the difference of the
logarithms of their projective coordinates.

Right multiplication by $L(q)$
sends \( t \mapsto q(t+1). \)
In logarithmic projective coordinates, the derivative is
\[ \frac{d\log(q(t+1))}{d\log t} = \frac{d(\log q(t+1))/dt}{d(\log t)/dt}
= \frac{t}{t+1}, \] which lies between $0$ and $1$.
Note however that it cannot be uniformly bounded away from 1.

Right multiplication by $U(q)$
sends \( t \mapsto qt/(t+1). \)
In logarithmic projective coordinates, the derivative is
\[ \frac{d\log(qt/(t+1))}{d\log t} = \frac{d(\log qt/(t+1))/dt}{d(\log t)/dt}
= \frac{1}{t+1}, \] which again lies between $0$ and $1$
but again cannot be uniformly bounded away from 1.

Finally,
\[ L(q)U(q)= \begin{pmatrix} q^2&q\\ q^2&q+1 \end{pmatrix}.  \]
Right multiplication by this matrix sends
\[ t \mapsto \frac{q^2(t+1)}{qt+q+1}.  \]
In logarithmic projective coordinates, the derivative is
\[ \frac{d}{d\log t} \log\left(\frac{q^2(t+1)}{qt+q+1}\right)
= \frac{t}{t+1}-\frac{qt}{qt+q+1} = \frac{t}{(t+1)(qt+q+1)}.  \]
This expression is maximized when \( t=\sqrt{(q+1)/(q)}\)
and its maximum value is
\[ \frac{1}{\bigl(\sqrt q+\sqrt{q+1}\bigr)^2}.  \]
Thus multiplication by $L(q)U(q)$ multiplies projective distance by 
$\kappa(q)$ or less, and clearly $\kappa(q) < 1$.
\end{proof}

We assume that the medial block contains at least one peak
(I explain later why this assumption is harmless).

Let \[ \mathcal C = 
\{(u_{\rm in}\ u_{\rm out}):u_{\rm in}>0,\ u_{\rm out}>0\} \]
be the positive cone of row vectors.
For a subset $\mathcal D\subseteq\mathcal C$, define its projective diameter by
\[ \operatorname{diam}(\mathcal D) =
\sup_{{\bf u},{\bf v}\in\mathcal D} \left| \log
\frac{u_{\rm in}/u_{\rm out}} {v_{\rm in}/v_{\rm out}} \right|.  \]

The following cone-contraction lemma is the key step
in the proof of existence of $\mu_{x,q}$.

\begin{lemma}
$\operatorname{diam}(\mathcal C\,{}_{\ell}T_k) \rightarrow 0$
as $\ell\rightarrow\infty$.
\end{lemma}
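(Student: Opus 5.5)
The plan is to combine the previous lemma with one extra observation. A single factor $L(q)U(q)$ already shrinks the whole cone $\mathcal C$ to a set of finite projective diameter, and every further occurrence of $L(q)U(q)$ then contracts by $\kappa(q)$.

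Since $x$ is irrational, the bit string $\cdots b_2 b_1 b_0$ attached to $S_x$ has infinitely many blocks of 1s and infinitely many blocks of 0s. Hence there are infinitely many indices $j$ with $b_{j+1}=1$ and $b_j=0$, which means $T_{j+1}=L(q)$ and $T_j=U(q)$. In the product ${}_{\ell}T_k=T_{\ell-1}\cdots T_k$, read left to right with indices decreasing, such a pair appears as the adjacent factor $L(q)U(q)$. Distinct pairs of this kind are automatically disjoint, because each one sits at a $1\to0$ transition. Let $N(\ell)$ be the number of such pairs with $k\le j<j+1\le \ell-1$. Then $N(\ell)\to\infty$ as $\ell\to\infty$ with $k$ fixed.

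Next I handle the fact that $\mathcal C$ has infinite diameter. The matrices $L(q)$ and $U(q)$ are nonnegative with no zero column, so they map $\mathcal C$ into $\mathcal C$. Also, in the projective coordinate $t$, the map $t\mapsto q^2(t+1)/(qt+q+1)$ induced by $L(q)U(q)$ is monotone on $(0,\infty)$. Its image is the interval between $q^2/(q+1)$ and $q$, so $\operatorname{diam}(\mathcal C\,L(q)U(q))\le\log(q+1)$.

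Now factor ${}_{\ell}T_k$ at the occurrence of $L(q)U(q)$ closest to $\ell$. The prefix sends $\mathcal C$ into $\mathcal C$, so after that pair the image lies in a set of diameter at most $\log(q+1)$. Every later factor is $L(q)$, $U(q)$, or a block $L(q)U(q)$, and I group the remaining $N(\ell)-1$ disjoint $LU$ pairs as blocks. By the previous lemma the single factors do not increase projective distance, and each block multiplies it by at most $\kappa(q)$. To pass from pairwise distances to a bound on the diameter, I apply the mean value theorem to the derivative bounds computed in the proof of that lemma: each map is $\kappa(q)$-Lipschitz (respectively $1$-Lipschitz) in $\log t$. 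This yields
\[ \operatorname{diam}(\mathcal C\,{}_{\ell}T_k)\le \log(q+1)\,\kappa(q)^{N(\ell)-1}\longrightarrow 0 . \]

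The main obstacle is the step from infinite to finite diameter. The previous lemma only gives multiplicative contraction, which is useless starting from the infinite diameter of $\mathcal C$, so the explicit image computation for $L(q)U(q)$ is what makes the argument work. The remaining care is bookkeeping: making sure the $LU$ pairs are counted in the right order given the right-to-left indexing convention, and that they are disjoint. That disjointness is exactly what the peaks of $S_x$ in the medial block supply.
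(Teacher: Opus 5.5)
This is correct and is essentially the paper's proof. A single $L(q)U(q)$ factor (a peak in the medial block) sends $\mathcal C$ into a set of finite projective diameter, each further disjoint occurrence contracts by at most $\kappa(q)$, and irrationality of $x$ forces $N(\ell)\to\infty$; your disjointness check and mean-value-theorem step just make explicit what the paper leaves implicit.

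One small slip: the image interval from $q^2/(q+1)$ to $q$ has logarithmic length $\log\bigl((q+1)/q\bigr)=\log(1+1/q)$. This exceeds $\log(q+1)$ when $q<1$, so your constant should be $\log(1+1/q)$. Any finite constant suffices, so the conclusion is unaffected.
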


\begin{proof}
Consider the image \[ \mathcal C\,{}_{\ell}T_k = 
\{\,{\bf u}\,{}_{\ell}T_k:{\bf u}\in\mathcal C\,\}.  \]
Since the medial block contains at least one peak,
the word in $L(q)$ and $U(q)$ corresponding to ${}_{\ell}T_k$
contains at least one occurrence of $L(q)U(q)$.
Hence we may write \[ {}_{\ell}T_k=A\,L(q)U(q)\,B \]
for suitable products $A$ and $B$ of $L(q)$'s and $U(q)$'s.
Since $\mathcal C A\subseteq\mathcal C$,
the image $(\mathcal C A)L(q)U(q)$ is contained in
$\mathcal C L(q)U(q)$.
The latter has finite projective diameter because
$L(q)U(q)$ is a strictly positive matrix,
and because right multiplication by a strictly positive 2-by-2 matrix 
sends the positive cone into a bounded interval of the projective line.
Finally, right multiplication by $B$ cannot increase projective diameter,
so $\mathcal C\,{}_{\ell}T_k$ also has finite projective diameter.

Now let $N(k,\ell) \geq 1$ be the number of occurrences 
of $L(q)U(q)$ in the transfer word for the medial block.
Note that each peak in the medial block corresponds
to one occurrence of $L(q)U(q)$ in the transfer word.
After the first such occurrence has made the projective diameter finite,
each subsequent occurrence decreases the diameter by a further factor
of at most $\kappa(q)$, while the intervening $L(q)$'s and $U(q)$'s do not
increase it. Hence
\[ \operatorname{diam}(\mathcal C\,{}_{\ell}T_k) \le
D(q)\,\kappa(q)^{N(k,\ell)-1}, \]
where
\[ D(q)=\operatorname{diam}(\mathcal C\,L(q)U(q))<\infty .  \]

Since $x$ is irrational, the snake $S_x$ has infinitely many peaks,
and for each fixed $k$, the number of peaks in the medial block
must go to infinity as $\ell \rightarrow \infty$.
Note also that our earlier assumption that $N(k,\ell) \geq 1$
is harmless because we are sending $\ell$ to infinity.
Thus, for fixed $k$, $N(k,\ell)\rightarrow\infty$
as $\ell\rightarrow\infty$. It follows that
$\operatorname{diam}(\mathcal C\,{}_{\ell}T_k) \rightarrow 0$
as $\ell\rightarrow\infty$.
\end{proof}

\begin{corollary}
The ratio
\[ \frac{\mu_{\tilde{x},q}(E)} {\mu_{\tilde{x},q}(E')} \]
has a limit that is independent of the finite continuation of the snake
beyond vertex $\ell$.
\end{corollary}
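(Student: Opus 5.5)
The plan is to show that the ratio $R_{\ell,k}({\bf u})$ is a continuous, monotone function of the projective coordinate of ${\bf u}' = {\bf u}\,{}_{\ell}T_k$. Then the cone-contraction lemma, which says $\operatorname{diam}(\mathcal C\,{}_{\ell}T_k)\to 0$, forces all tails to give nearly the same ratio. Independence of the tail together with the existence of the limit will follow from a Cauchy-type argument.

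First, write ${\bf u}' = (u'_{\rm in}\ u'_{\rm out})$ with projective coordinate $t' = u'_{\rm in}/u'_{\rm out}$. Write ${\bf s}_E = (a\ b)^T$ and ${\bf s}_{E'} = (a'\ b')^T$. These entries are nonnegative and depend only on $k$, $E$, $E'$ and $q$, not on $\ell$ or the tail. Then
\[ R_{\ell,k}({\bf u}) = \frac{a t' + b}{a' t' + b'} =: \phi(t'). \]
Since $E'$ is an atomic cylinder event, it is realized by some filter; hence ${\bf s}_{E'}\neq 0$, and the denominator is positive for $t'>0$. The function $\phi$ is a Möbius map with nonnegative coefficients, so on $(0,\infty)$ it is continuous and monotone. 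In logarithmic coordinates its derivative is bounded: one computes $|d\log\phi/d\log t'| \le 1$, because $\log(at'+b)$ and $\log(a't'+b')$ each have log-derivative in $[0,1]$, so their difference has absolute value at most $1$. Hence
\[ |\log \phi(t'_1) - \log\phi(t'_2)| \le |\log t'_1 - \log t'_2|. \]

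Next, take two finite continuations $\tilde x_1,\tilde x_2$ of $S_x$, both agreeing with $S_x$ on $0,\dots,\ell$. Their tail vectors ${}_{m_1}{\bf r}_\ell$ and ${}_{m_2}{\bf r}_\ell$ are positive row vectors. They lie in the closed cone, but applying one $L$ or $U$ to $(q\ 1)$ keeps them positive, so they lie in $\mathcal C$. The two corresponding ratios therefore differ in $\log$ by at most $\operatorname{diam}(\mathcal C\,{}_{\ell}T_k)$, which tends to $0$ by the cone-contraction lemma.

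Now let $\tilde x_j$ be any sequence of rationals tending to $x$. For each $\ell$, the snakes $S_{\tilde x_j}$ eventually agree with $S_x$ on vertices $0,\dots,\ell$; this uses the stabilization of partial quotients noted earlier. So for $j,j'$ large the log-ratios differ by at most $\operatorname{diam}(\mathcal C\,{}_{\ell}T_k)$. The sequence is therefore Cauchy and has a limit. Interleaving two such sequences shows the limit is the same for both, so it is independent of the continuation. The limit also equals $\lim_{\ell\to\infty}\phi$ evaluated on the shrinking nested intervals $\mathcal C\,{}_{\ell}T_k$. These are nested because $\mathcal C\,{}_{\ell+1}T_k = (\mathcal C T_\ell)\,{}_{\ell}T_k \subseteq \mathcal C\,{}_{\ell}T_k$, so their intersection is a single projective point $t^*$, and the limit is $\phi(t^*)$.

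The main obstacle is the degenerate case where ${\bf s}_E$ or ${\bf s}_{E'}$ has a zero entry, or where $\phi(t')$ is $0$ for some events. Then the log-Lipschitz bound must be replaced by ordinary continuity. I would handle this by noting that $t^*\in(0,\infty)$, since the contracted images stay inside the compact interval given by the strictly positive matrix $L(q)U(q)$. Uniform continuity of $\phi$ on that compact interval then gives convergence without taking logs.
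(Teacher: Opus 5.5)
Your proposal is correct and is essentially the paper's argument: write the ratio as a continuous function of the projective coordinate of ${\bf u}\,{}_{\ell}T_k$, then let the cone-contraction lemma shrink $\mathcal C\,{}_{\ell}T_k$. Your log-Lipschitz bound plays the role of the paper's uniform-continuity step, and your Cauchy/interleaving argument spells out the existence of the limit, which the paper leaves implicit. The zero-entry case you flag as the main obstacle is not actually one, since $|d\log\phi/d\log t'|\le 1$ holds whenever ${\bf s}_E$ and ${\bf s}_{E'}$ are nonzero nonnegative vectors, as they always are for atomic cylinder events.
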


\begin{proof}
The function
\[ {\bf w} \mapsto \frac{{\bf w}\,{\bf s}_E} {{\bf w}\,{\bf s}_{E'}} \]
depends only on the projective class of the positive row vector
${\bf w}$, and is continuous as a function on the projective line.
Since $\mathcal C\,{}_{\ell}T_k$ is eventually contained in 
a compact subinterval of the projective line,
continuity implies uniform continuity there.
Because the projective diameter of $\mathcal C\,{}_{\ell}T_k$
tends to zero, it follows that
\[ R_{\ell,k}({\bf u}) =
\frac{{\bf u}\,{}_{\ell}T_k\,{\bf s}_E}
     {{\bf u}\,{}_{\ell}T_k\,{\bf s}_{E'}} \]
becomes independent of the choice of positive row vector ${\bf u}$ as
$\ell\to\infty$.
Taking ${\bf u}={}_m{\bf r}_{\ell}$, we obtain the desired conclusion.
\end{proof}

\begin{theorem}
There exists a unique Borel probability measure $\mu_{x,q}$ on $\cF_x$ 
whose cylinder probabilities are the limits constructed above.
\end{theorem}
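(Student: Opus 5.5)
The plan is to build the measure from consistent finite-dimensional marginals and then invoke the Kolmogorov extension theorem (equivalently, Carathéodory's extension on the algebra of cylinder sets).

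\textbf{Step 1: topology on $\cF_x$.} Identify $\cF_x$ with a subset of $\{0,1\}^{\mathbb N}$ by sending a filter to its indicator function. The filter condition ``$i\in F$ and $j\gtrdot i$ imply $j\in F$'' involves only adjacent pairs of vertices. So $\cF_x$ is an intersection of clopen sets, hence closed in the compact product space, hence compact metrizable. Its Borel $\sigma$-algebra is generated by the atomic cylinder events on $0,\dots,k$ for $k\ge0$.

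\textbf{Step 2: limiting marginals.} Fix $k$. For every pair $E,E'$ of atomic cylinder events on $0,\dots,k$, the Corollary gives a limit
\[
\rho(E,E')=\lim \mu_{\tilde x,q}(E)/\mu_{\tilde x,q}(E'),
\]
independent of the tail. Since there are finitely many such events, set
\[
p_k(E)=\lim \mu_{\tilde x,q}(E)=\lim \Bigl(\sum_{E''}\mu_{\tilde x,q}(E'')/\mu_{\tilde x,q}(E)\Bigr)^{-1}.
\]
This step needs the ratios to have finite, positive limits. I would check this directly. Every proximal pattern that is a filter of the induced subposet extends to a filter of the whole snake, so ${\bf s}_E$ is a nonzero nonnegative column vector. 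Moreover ${\bf u}\,{}_\ell T_k$ has strictly positive entries once the medial block contains an $L(q)U(q)$. Hence the ratio stays in a compact subset of $(0,\infty)$, and the limits are positive and finite.

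It follows that $p_k$ is a probability vector on the atomic events on $0,\dots,k$. The limit is taken over any sequence of finite snakes agreeing with $S_x$ on $0,\dots,\ell$ with $\ell\to\infty$, which includes the snakes $S_{\tilde x}$ of rationals $\tilde x\to x$, as noted earlier in the section.

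\textbf{Step 3: consistency.} Each atomic event on $0,\dots,k$ is a finite disjoint union of atomic events on $0,\dots,k+1$. The finite measures $\mu_{\tilde x,q}$ are additive, and limits of finite sums are sums of limits. Therefore $p_{k+1}$ projects to $p_k$.

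\textbf{Step 4: extension and uniqueness.} Kolmogorov's theorem, or Carathéodory applied to the cylinder algebra, yields a Borel probability measure on $\{0,1\}^{\mathbb N}$ with these marginals. Countable additivity on the algebra is automatic here: cylinder sets are clopen in a compact space, so any countable disjoint cover of a cylinder by cylinders has a finite subcover. This measure gives zero mass to the complement of $\cF_x$, because any non-filter violates the filter condition at some pair $i,i+1$, and the cylinders witnessing that violation have $p_k$-probability zero. Restricting gives $\mu_{x,q}$ on $\cF_x$. Uniqueness follows from the $\pi$–$\lambda$ theorem, since atomic cylinder events together with $\emptyset$ form a $\pi$-system generating the Borel $\sigma$-algebra.

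\textbf{Main obstacle.} The subtle point is Step 2: turning ratio limits into probability limits. This requires ruling out $\mu_{\tilde x,q}(E')\to0$ for the reference event, and it is exactly where the strict positivity of ${\bf s}_E$ and of the projected vectors is used. Everything else is standard measure theory.
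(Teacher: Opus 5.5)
Your proposal is correct and takes essentially the same route as the paper. Both proofs take limiting ratios of cylinder probabilities against a reference atomic event, normalize them into consistent finite-dimensional marginals, and extend from the cylinder algebra on the closed set $\cF_x\subseteq\{0,1\}^{\mathbb N}$ by the standard extension theorem. You also supply details the paper leaves implicit: that the limiting ratios are finite and positive (because ${}_{\ell}T_k$ becomes a strictly positive matrix once the medial block contains a peak), countable additivity via compactness, zero mass off $\cF_x$, and uniqueness via the $\pi$--$\lambda$ theorem.
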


\begin{proof}
Choose one atomic cylinder event $E_0$ on the vertices
$0,\dots,k$.  For every atomic cylinder event $E$, define
\[ \rho(E) = \lim_{\ell\to\infty} 
\frac{\mu_{\tilde{x},q}(E)} {\mu_{\tilde{x},q}(E_0)}.  \]
Since there are only finitely many atomic cylinder events on
$0,\dots,k$, we may normalize these ratios by setting
\[ \mu_{x,q}(E) = \frac{\rho(E)} {\sum_F\rho(F)}, \]
where the sum ranges over all atomic cylinder events on
$0,\dots,k$.

These probability distributions are compatible as $k$ varies, because
they arise as limits of the compatible finite-snake distributions.
Hence they determine a finitely additive probability measure $\mu_{x,q}$ 
on cylinder sets of $S_x$.
The space of filters of $S_x$ is a closed subset of $\{0,1\}^{\mathbb N}$,
and the cylinder events generate its Borel $\sigma$-algebra.
The standard extension theorem therefore yields a unique Borel
probability measure $\mu_{x,q}$ having these cylinder probabilities.
\end{proof}

Thus we can now define our $q$-deformation of the positive real numbers:

\begin{definition}
For every irrational $x>0$, define 
\[ \llb x\rrb_q = \frac{\mu_{x,q}(0\in F)} {\mu_{x,q}(0\notin F)}. \]
\end{definition}

\begin{theorem}
Let $x>0$ be irrational, let $q>0$, and let $(r_n)$ be any sequence of
positive rational numbers converging to $x$.
Then the measures $\mu_{r_n,q}$ converge weakly to $\mu_{x,q}$,
and in particular, $\llb r_n \rrb_q \rightarrow \llb x \rrb_q$.
\label{thm:continuity}
\end{theorem}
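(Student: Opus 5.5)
The plan is to reduce weak convergence to convergence of cylinder probabilities, and then get that convergence from the cone-contraction lemma. The space $\cF_x$ and the spaces $\cF_{r_n}$ all sit inside $\{0,1\}^{\mathbb N}$; finite filters are padded with zeros, so a vertex beyond the end of the snake is simply ``omitted.'' This product space is compact, and its cylinder sets are clopen and form a convergence-determining class. So it suffices to show that for every $k$ and every atomic cylinder event $E$ on $0,\dots,k$ we have $\mu_{r_n,q}(E)\to\mu_{x,q}(E)$. The statement $\llb r_n\rrb_q\to\llb x\rrb_q$ is then the special case $k=0$, since $\llb r\rrb_q$ is $\mu(0\in F)/(1-\mu(0\in F))$ and that expression is continuous in the probability.

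First I will handle combinatorial agreement. Write $x=[c_1;c_2,\dots]$. Since $x$ is irrational, for each $j$ the rationals sufficiently close to $x$ have continued fraction expansions beginning $c_1,\dots,c_j$, possibly followed by more terms. The finite snake $S_{r_n}$ is built right-to-left from the odd-length expansion of $r_n$, so it agrees with $S_x$ on vertices $0,\dots,\ell_n$. One caveat: the last partial quotient may be shortened or split by the odd-length convention, but this only affects the final block. Hence $\ell_n\to\infty$.

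Here I must check one thing: the bit convention (which parity of partial quotients gives $1$'s) is consistent between the rational construction in Proposition~\ref{prop:cw} and the definition of $S_x$. Equivalently, $S_{r_n}$ restricted to its first $\ell_n$ vertices really is an initial segment of $S_x$ for the same orientation. I would verify this on an example and otherwise trust it.

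Next, fix $k$ and $\epsilon>0$, and fix a reference event $E_0$ on $0,\dots,k$. For $n$ large, $\ell_n>\ell$ for any prescribed $\ell$. By the formula preceding the cone-contraction lemma,
\[ \frac{\mu_{r_n,q}(E)}{\mu_{r_n,q}(E_0)}=R_{\ell,k}({\bf u}_n) \]
for some positive row vector ${\bf u}_n$ encoding the portion of $S_{r_n}$ beyond $\ell$. Positivity of ${\bf u}_n$ holds because the tail always admits filters both containing and omitting vertex $\ell$, so both entries are positive. By the cone-contraction lemma and the uniform-continuity argument in the Corollary, $R_{\ell,k}$ varies by less than $\epsilon$ over the whole cone $\mathcal C$ once $\ell$ is large. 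The limit $\rho(E)$ defining $\mu_{x,q}$ lies within that same range, because it is itself a limit of values of $R_{\ell,k}$ at positive vectors. Therefore $|\mu_{r_n,q}(E)/\mu_{r_n,q}(E_0)-\rho(E)|<\epsilon$ for all large $n$. There are finitely many $E$ on $0,\dots,k$, and normalizing is continuous, so $\mu_{r_n,q}(E)\to\rho(E)/\sum_F\rho(F)=\mu_{x,q}(E)$.

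I expect the main obstacle to be uniformity: showing that the bound does not depend on which tail is attached. The cone-contraction lemma bounds the diameter of $\mathcal C\,{}_\ell T_k$ using only the medial block. That block is shared by $S_x$ and all $S_{r_n}$ with $\ell_n\ge\ell$, so the estimate is automatically uniform in $n$.

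A secondary technicality is that $\rho(E_0)$ must be positive, so that dividing by it is harmless. If the chosen $E_0$ is impossible, I replace it by a possible event on $0,\dots,k$. Its pinned vector ${\bf s}_{E_0}$ is then nonzero and nonnegative, and ${}_\ell T_k$ maps $\mathcal C$ into a compact subset of the open cone once a peak occurs. Together these keep the ratios bounded away from $0$ and $\infty$ uniformly.
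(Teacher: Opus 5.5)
Your proposal is correct and follows the same route as the paper. The snakes $S_{r_n}$ agree with $S_x$ on ever longer initial segments, the tail enters only through a positive row vector, and the cone-contraction lemma makes the cylinder ratios insensitive to that vector. You make explicit what the paper's short proof leaves implicit: the common embedding in $\{0,1\}^{\mathbb N}$, uniformity in $n$ (since the estimate depends only on the shared medial block), and the final normalization.

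The bit-convention check you flagged does go through. In Proposition~\ref{prop:cw} the rightmost $c_1$ bits are 1's, and $b_i=1$ means $i+1\lessdot i$. That is a downward step when read right to left, exactly as in the definition of $S_x$.
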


\begin{proof}
The argument used to construct $\mu_{x,q}$
made no assumptions about the rational number $\widetilde{x}$
used to approximate $x$, other than the assumption that
as $\ell \rightarrow \infty$, the snake $S_{\widetilde{x}}$
should look like the snake $S_x$ at vertices smaller than $\ell$.
But a sequence of $\widetilde{x}$'s for which
$S_{\widetilde{x}}$ converges to $S_x$ in this sense
is nothing other than a sequence of $\widetilde{x}$'s
that converges to $x$ in the ordinary sense.
Hence the cylinder probabilities converge, 
which is precisely the definition of weak convergence 
for probability measures on $\cF_x$.
The convergence of $\llb r_n \rrb_q$ follows immediately.
\end{proof}

It is worth mentioning that for fixed $q$,
continuity of the map $x \mapsto \llb x \rrb_q$
holds only in the vicinity of irrational values of $x$.
For instance, taking $q=2$, it is not hard to show that
every rational $r<1$ satisfies $[r]_2 < 1/2$
while every rational $r>1$ satisfies $[r]_2 > 3/2$.
Indeed, it appears that for $0 < q \neq 1$,
the function that sends $x>0$ to $\llb x \rrb_q$
is a ``devil's staircase'' function,
continuous at irrational values of $x$
and discontinuous at rational values of $x$.

The continuity of $x \mapsto \llb x \rrb_q$ for irrational values of $x$ 
has several immediate consequences. First and foremost,
putting $q=1$ we have the formula $\llb x \rrb_1 = x$ which justifies 
my calling $\llb x \rrb_q$ a ``$q$-deformation of $x$'' to begin with.
Also, continuity implies that 
$\llb x \rrb_q$ satisfies the relations
$$\llb x+1 \rrb_q = q \llb x \rrb_q + q$$ and
$$\llb 1/x \rrb_q = \frac{1}{\llb x \rrb_{1/q}}$$
for all positive real numbers,
not just all positive rational numbers.

Theorem~\ref{thm:continuity} tells us that there is a different definition
of $\llb x \rrb_q$ that avoids statistical mechanics entirely:
for $x>0$ one can simply define the function $q \rightarrow \llb x \rrb_q$
as the pointwise limit of the functions $q \rightarrow \llb r_n \rrb_q$
where $r_n \rightarrow x$.
However, to show that this is definition makes sense---that is,
to show that these functions converge pointwise
to a function that is independent of the
choice of the approximating sequence of rationals---requires
nontrivial work.

Our measures $\mu_{x,q}$ are Gibbs measures.
From the point of view of statistical mechanics, 
the existence and uniqueness of these Gibbs measures are unsurprising, 
since one-dimensional lattice models with finite-range interactions 
are known to admit unique Gibbs measures. 
One could therefore appeal to the general theory 
of one-dimensional Gibbs measures (see, for example,~\cite{G11}). 
The direct argument given here has the advantage 
of constructing the Gibbs measure explicitly as the limit 
of the measures on finite snakes introduced earlier, 
making the relationship between the finite 
and infinite models completely transparent.

\section{The $q$-deformed golden ratio}
\label{sec:golden}

As an application of the poset model,
we present a derivation of $\llb \phi \rrb_q$
where $\phi = (1+\sqrt{5})/2$.

We begin with the case $q=1$.
Let $p_i$ denote the probability that
a uniformly random filter $F$ of $S_\phi$ contains vertex~$i$.
We derive two relations between $p_0$ and $p_1$.

First, we find a formula for $p_1$ 
by conditioning on whether 0 belongs to $F$.
If $0 \notin F$, then necessarily $1 \notin F$.
If $0 \in F$, then deleting the vertex~0 yields 
a poset isomorphic to the dual poset $S_\phi^*$.
By the duality principle of the previous section,
there is a natural bijection between the filters of $S_\phi$
and the filters of $S_\phi^*$
under which a vertex belongs to a filter of $S_\phi$
if and only if the corresponding vertex is absent from
the corresponding filter of $S_\phi^*$.
Hence \[ p_1 = p_0(1-p_0).  \]

Next, we find a formula for $p_0$
by conditioning on whether 1 belongs to $F$.
If $1 \in F$, then necessarily $0 \in F$.
If $1 \notin F$, then the status of $0$ is unconstrained,
so at $q=1$ it is equally likely to be present or absent.
Hence \[ p_0 = p_1 + \frac12(1-p_1) = \frac{1+p_1}{2}, \]
or equivalently, \[ p_1 = 2p_0-1.  \]

Combining this last equation with $p_1=p_0(1-p_0)$ gives
$p_0^2+p_0-1=0$,
whose unique solution in $(0,1)$ is $p_0=\frac{\sqrt5-1}{2}=\frac1\phi$.
Therefore the odds $\frac{p_0}{1-p_0}$ is equal to $\phi$.

Now let $q$ be arbitrary.
Write $p_0(q)$ as $p_0$ for short
and write $p_0(1/q)$ as $\bar p_0$.
Write
\[ O=\frac{p_0}{1-p_0}=\llb\phi\rrb_q, \qquad
\bar O=\frac{\bar p_0}{1-\bar p_0} =\llb\phi\rrb_{1/q}.  \]

First, condition on whether 0 belongs to $F$.
If $0 \notin F$, then necessarily $1 \notin F$.
If $0 \in F$, then deleting vertex~0 from the poset $S_\phi$
yields a poset isomorphic to the dual poset $S_\phi^*$.
Under this bijection, the weight of a filter is transformed 
by replacing $q$ by $1/q$ while the in-versus-out status
of vertex~1 is reversed, so the conditional probability that $1 \in F$
is $1-\bar p_0$. Hence \[ p_1=p_0(1-\bar p_0).  \]

Next, condition on whether 1 belongs to $F$.
If $1 \in F$, then necessarily $0 \in F$.
If $1 \notin F$, then vertex~0 may be either present or absent,
with weights $q$ and $1$, respectively.
Hence $\Pr(0\in F\mid 1\notin F)=\frac{q}{1+q}$,
so \[ p_0 = p_1 + (1-p_1)\frac{q}{1+q}.  \]

Solving the preceding equation for $p_1$ gives
\[ p_1=(1+q)p_0-q.  \]
Combining this with our earlier equation for $p_1$, we obtain
\[ p_0(1-\bar p_0)=(1+q)p_0-q, \]
or equivalently, $p_0 \bar p_0 = q(1-p_0)$.
Substituting $p_0=\frac{O}{1+O}$ and $\bar p_0 = \frac{\bar O}{1+\bar O}$
we obtain $$\frac{O\bar O}{1+\bar O}=q$$ or equivalently $$O=q+\frac{q}{\bar O}.$$
Replacing $q$ by $1/q$ yields the companion equation
$\bar O=\frac1q+\frac{1}{qO}$.
Substituting this last equation into the one before gives
\[ O = q+\frac{q^2O}{1+O}.  \]
Clearing denominators gives $O(1+O)=q(1+O)+q^2O$,
so $O^2+(1-q-q^2)O-q=0$.
Since $O>0$, we conclude that
\[ \llb\phi\rrb_q = \frac{q+q^2-1+\sqrt{(1-q-q^2)^2+4q}}{2}.  \]

This expression is equal to $q$ times Morier--Genoud--Ovsienko's $q$-golden ratio,
in agreement with the expectation that $\llb\phi\rrb_q$
should equal $q[\phi]_q$ wherever both are defined.

\section{A dimer-model interpretation}
\label{sec:dimer}

The snake posets we introduced above are intimately related
to snake graphs as defined in~\cite{P01}.
A snake graph is a graph obtained by gluing together even-sided polygons 
to form a finite or infinite chain,
where each polygon is glued to the next along an edge
and no other gluings are performed.
We assume there is a first polygon in the chain containing 
a special edge $e$, which we will assume is
the upper horizontal edge of the rightmost hexagon.
Let that polygon have its vertices colored alternately black and white
so that in the counterclockwise orientation of the polygon
the special edge points from black to white.
Then all polygons in the chain inherit the two-coloring,
and each edge between neighboring polygons in the chain
can be assigned the letter $L$ or the letter $U$
according to the way in which the shared edge is colored.
To be more precise, suppose $P$ and $P'$ are adjacent polygons,
with $P$ nearer the tail of the snake and $P'$ nearer the head.
Then the shared edge between $P$ and $P'$
is marked with an $L$ (resp.\ a $U$) if, 
when one moves from the interior of $P$
to the interior of $P'$ by crossing the shared edge,
the black endpoint of that edge
is on one's left (resp.\ one's right).
Figure~\ref{fig:hex-fifty-seven-again}
shows the hexagonal snake graph with the code $LLUUL$,
associated with the rational number $r=10/7=[1;2,3]$.
The general rule is that the rational number $r = [c_1;c_2,c_3,\dots,c_m]$
with $m$ odd and $c_1 \geq 0$ and $c_2,c_3,\dots,c_m > 0$
corresponds to a hexagon snake that, read from right to left, 
has $c_1$ downward steps, followed by $c_2$ upward steps,
followed by $c_3$ downward steps, followed by $c_4$ upward steps,
followed by $c_5$ downward steps, \dots, followed by $c_m-1$ downward steps.
(Subtracting 1 from $c_m$ corresponds to the way we ignore the initial 1
in the binary expansion of the Calkin-Wilf index of $r$.)

\begin{figure}[h!]
\begin{center}
\includegraphics[width=3in]{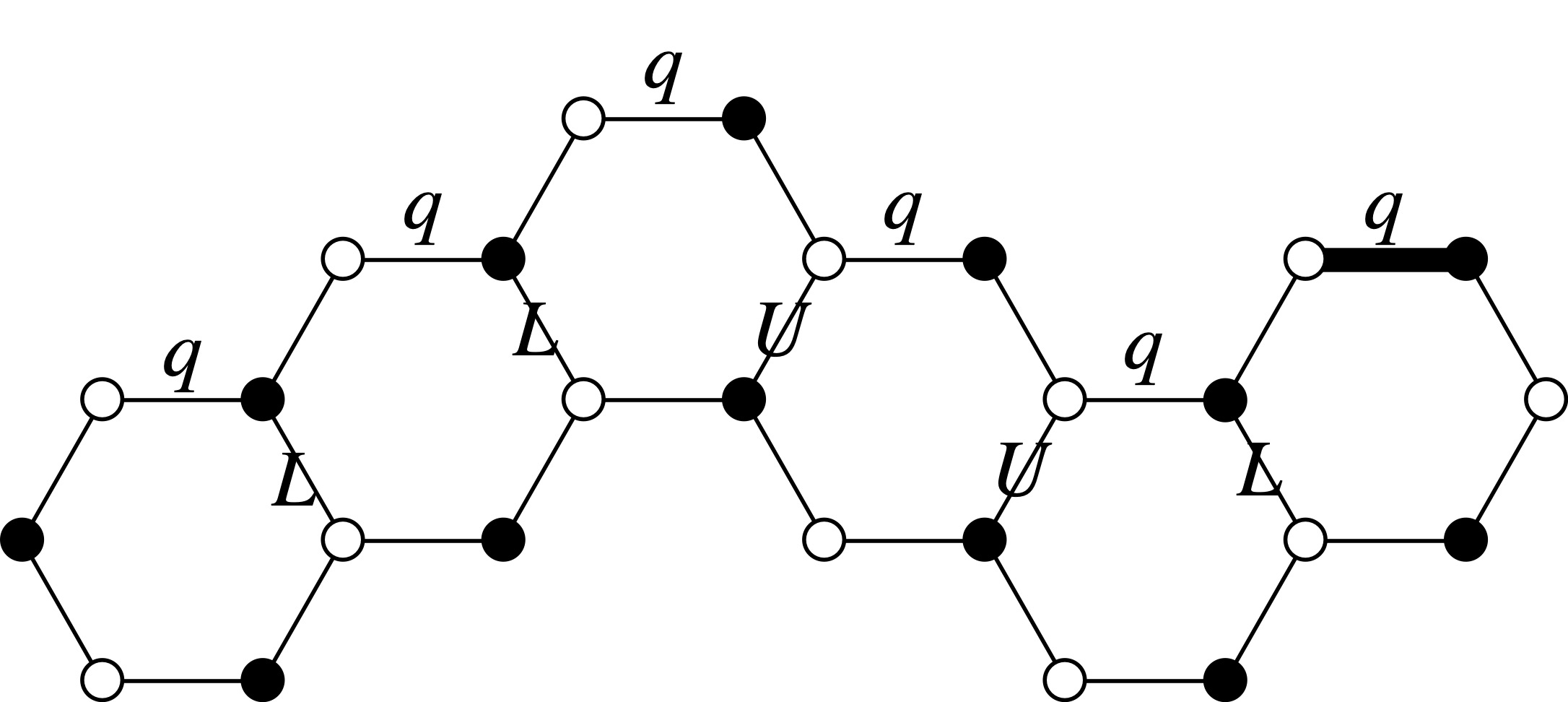}
\end{center}
\caption{The hexagon snake graph $G_{10/7}(q) = G_{[1;2,3]}(q)$.}
\label{fig:hex-fifty-seven-again}
\end{figure}

The connection between $q$-deformed rationals and dimer models
has been explored by others using square snakes, but I find 
that it is more simply expressed in terms of hexagon snakes.
To turn the snake poset $S_r$ into the corresponding hexagon snake graph $G_r$,
replace each poset element by a unit hexagon with two horizontal edges
and identify adjoining diagonal edges 
as shown in Figure~\ref{fig:hex-fifty-seven-again}
for $S_{10/7}$ (compare with Figure~\ref{fig:snake-fifty-seven}).
Each horizontal edge is either the upper edge of a hexagon
or the lower edge of a hexagon.
We assign each horizontal edge of the first type weight $q$
and assign every other edge weight $1$;
this gives a weighted graph $G_r (q)$.
The weighting determines a probability distribution
on the (perfect) matchings of the graph in the usual way.
The event $E_0$ from the poset context
corresponds to the event in which the rightmost horizontal edge
belongs to a $q$-random matching.
In the 10/7 example, the reader can check
that the matchings that include this edge
have total weight $q+q^2+2q^3+3q^4+2q^5+q^6$
while the matchings that omit this edge
have total weight $1+q+2q^2+2q^3+q^4$,
giving the probability ratio $\llb 10/7 \rrb_q$.

\begin{figure}[h!]
\begin{center}
\includegraphics[width=5in]{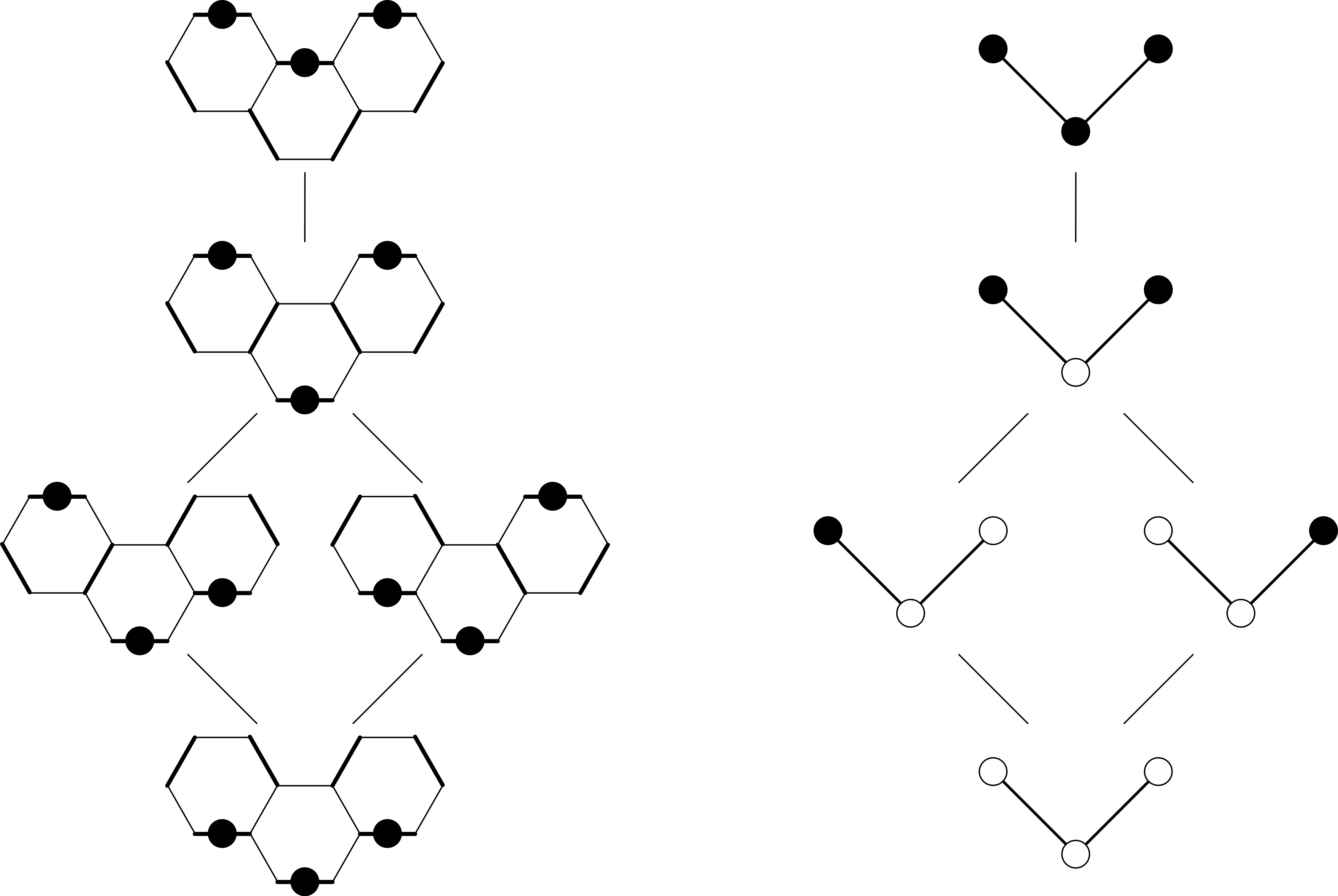}
\end{center}
\caption{Matchings of $G_{3/2}$ and filters in $S_{3/2}$.}
\label{fig:particles}
\end{figure}

It would be possible to give a self-contained treatment 
of dimers on snake graphs using the transfer-matrix formalism
without discussing posets at all,
but the theory of moves presented in~\cite{P25} gives another route.
The results proved there imply in particular that
every dimer cover of a snake graph can be obtained from every other
by means of face-moves in which three edges that border a hexagon
(one horizontal, one of positive slope, and one of negative slope)
are replaced by the other three edges that border the same hexagon.
Such a move conserves the number of horizontal edges.
We may imagine particles that occupy
the midpoints of the horizontal edges that belong to the dimer cover,
so that the effect of a face-move is that
the particle jumps from one horizontal edge to another,
as shown on the left side of Figure~\ref{fig:particles}.
This particle model serves as a bridge between
the dimer model and the poset model
without the machinery of height-functions:
the presence or absence of a particle
along the upper edge of a hexagon in a dimer cover of $G_r$
corresponds to the presence or absence
of the associated poset element in a filter of $S_r$
(see the right side of the Figure)
and both have the same $q$-weight.
That is, there is a weight-preserving one-to-one correspondence
between filters $F$ of $S_r$ and matchings $M$ of $G_r$
where an element of $S_r$ belongs to $F$
if and only if the corresponding edge $e$ in $G_r$ belongs to $M$.

\begin{figure}[h!]
\begin{center}
\includegraphics[width=2.2in]{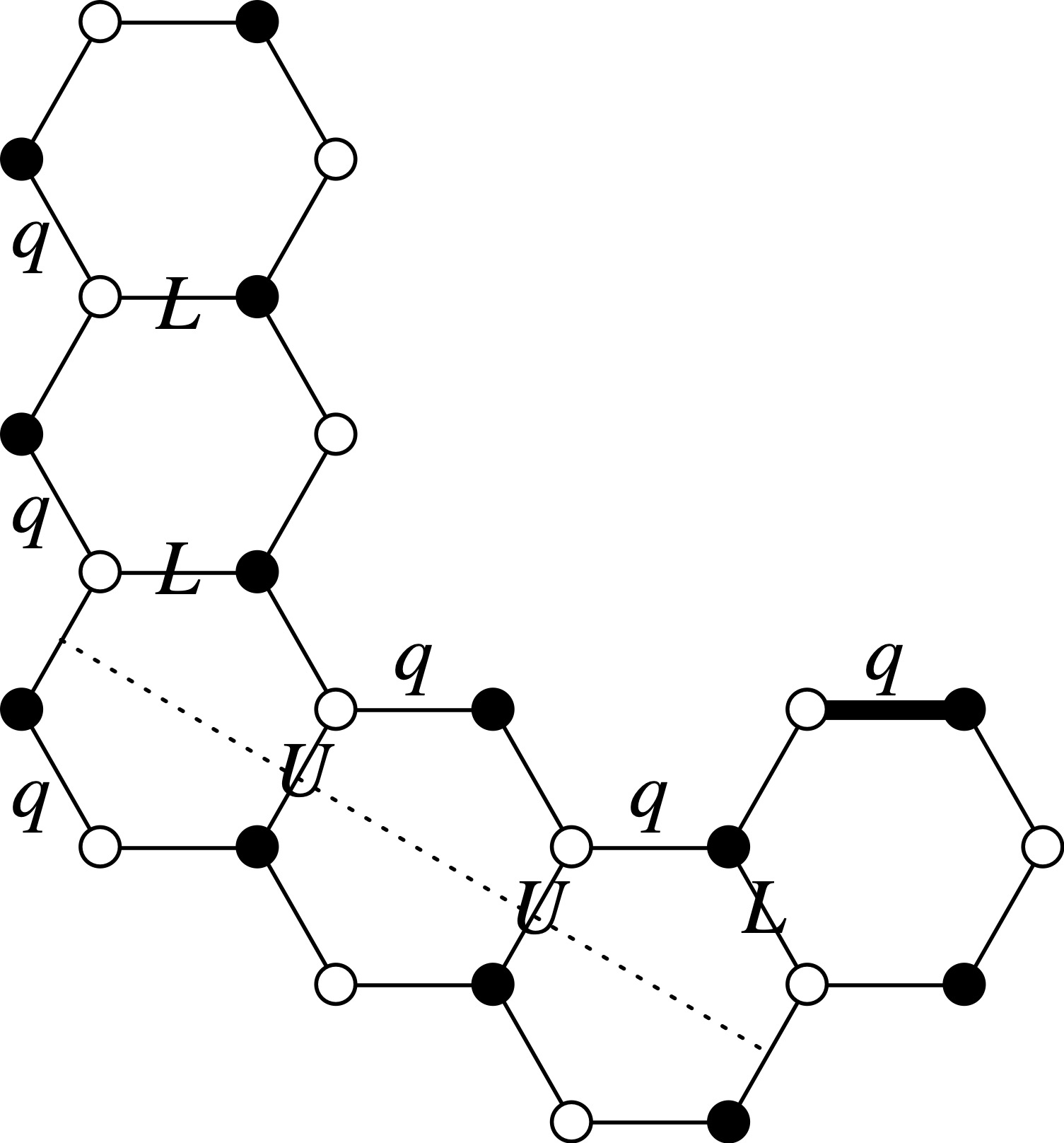}
\end{center}
\caption{Another hexagon snake graph for $r=10/7$.}
\label{fig:another-hex-fifty-seven}
\end{figure}

Figure~\ref{fig:another-hex-fifty-seven} shows 
a different snake graph with the code $LLUUL$.
It is obtained from the weighted graph $G_{10/7}(q)$ 
shown in Figure~\ref{fig:hex-fifty-seven-again}
by reflecting the leftmost part of the snake
across the dashed diagonal line, including the edge-weights.
Although the modified weighted graph is not isomorphic 
to the weighted graph $G_{10/7}(q)$
it still has the property that the odds 
in favor of inclusion of the special edge is $\llb 10/7 \rrb_q$.
To see why, notice that when $u$ and $v$ are the two endpoints
of the edge of a snake that joins hexagon $H$ to hexagon $H'$,
every matching of the snake matches $u$ with $v$ 
or matches $u$ and $v$ with their neighbors in $H$
or matches $u$ and $v$ with their neighbors in $H'$.
Consequently, flipping the part of the snake graph
that lies on one side of edge $uv$ induces a bijection
between the matchings of the flipped graph
and the matchings of the unflipped graph,
and by flipping the $q$-weights we make
the bijection weight-preserving as well.

The dimer representation also applies in the infinite limit.
For $x>0$ irrational, say $x=[c_1;c_2,c_3,c_4,\dots]$,
let $G_x(q)$ be the hexagonal snake graph that, read from right to left, 
has $c_1$ downward steps, followed by $c_2$ upward steps,
followed by $c_3$ downward steps, followed by $c_4$ upward steps,
and so on. Let the special edge $e$ once again be
the upper horizontal edge of the rightmost hexagon.
Then $\llb x \rrb_q$ is the odds of inclusion of the special edge
under the natural probability measure $\mu_{x,q}$
on the set of perfect matchings of a half-infinite snake graph $G_x(q)$.

\begin{figure}[h!]
\begin{center}
\includegraphics[width=6in]{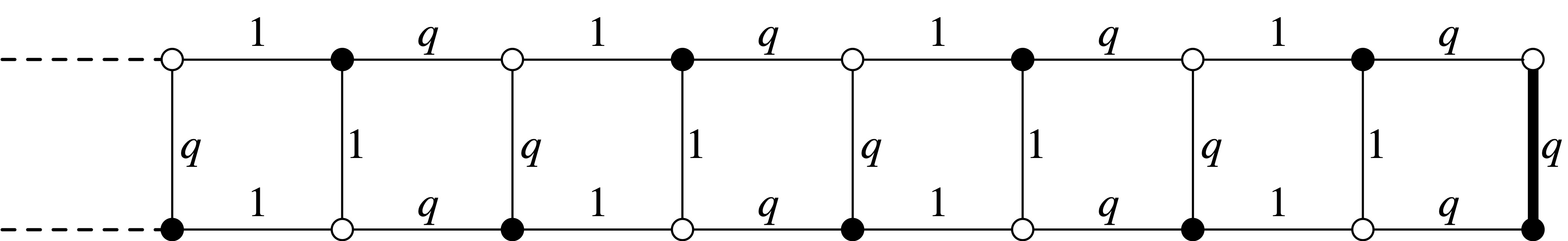}
\end{center}
\caption{A square snake graph for the golden ratio.}
\label{fig:squares}
\end{figure}

In the case where $x$ is the golden ratio $\phi=[1;1,1,1,1,\dots]$, 
it is appealing to replace the weighted snake graphs made of hexagons
by a weighted snake graph made of squares,
since this replaces a zigzagging chain of hexagons
by a straight chain of squares, as shown in Figure~\ref{fig:squares}.
Just as in the case of our two $r=10/7$ graphs,
this transformation is not a graph isomorphism,
but it leaves the number of perfect matchings and 
the statistics of random perfect matchings unaffected,
although the weighting-function gets transformed in a complicated way.
The reader can check that under the weighting shown,
replacing two horizontal edges that surround a square plaquette 
by two vertical edges that surround the same plaquette
increases the weight by a factor of $q$ --
exactly as is the case for the height function
associated with the dimer configuration.
With $q=1$, we find that the probability that
a random perfect matching of the graph
contains the rightmost edge is $\phi^{-1}$, giving the odds
$(\phi^{-1})/(1-\phi^{-1}) = \phi = \llb \phi \rrb_1$.

Figure~\ref{fig:e-snake}
shows a snake graph associated with the constant 
$e=[2;1,2,1,1,4,1,1,6,\dots]$;
the odds that a random perfect matching of this graph
contains the marked edged is $e$.

\begin{figure}[h!]
\begin{center}
\includegraphics[width=5in]{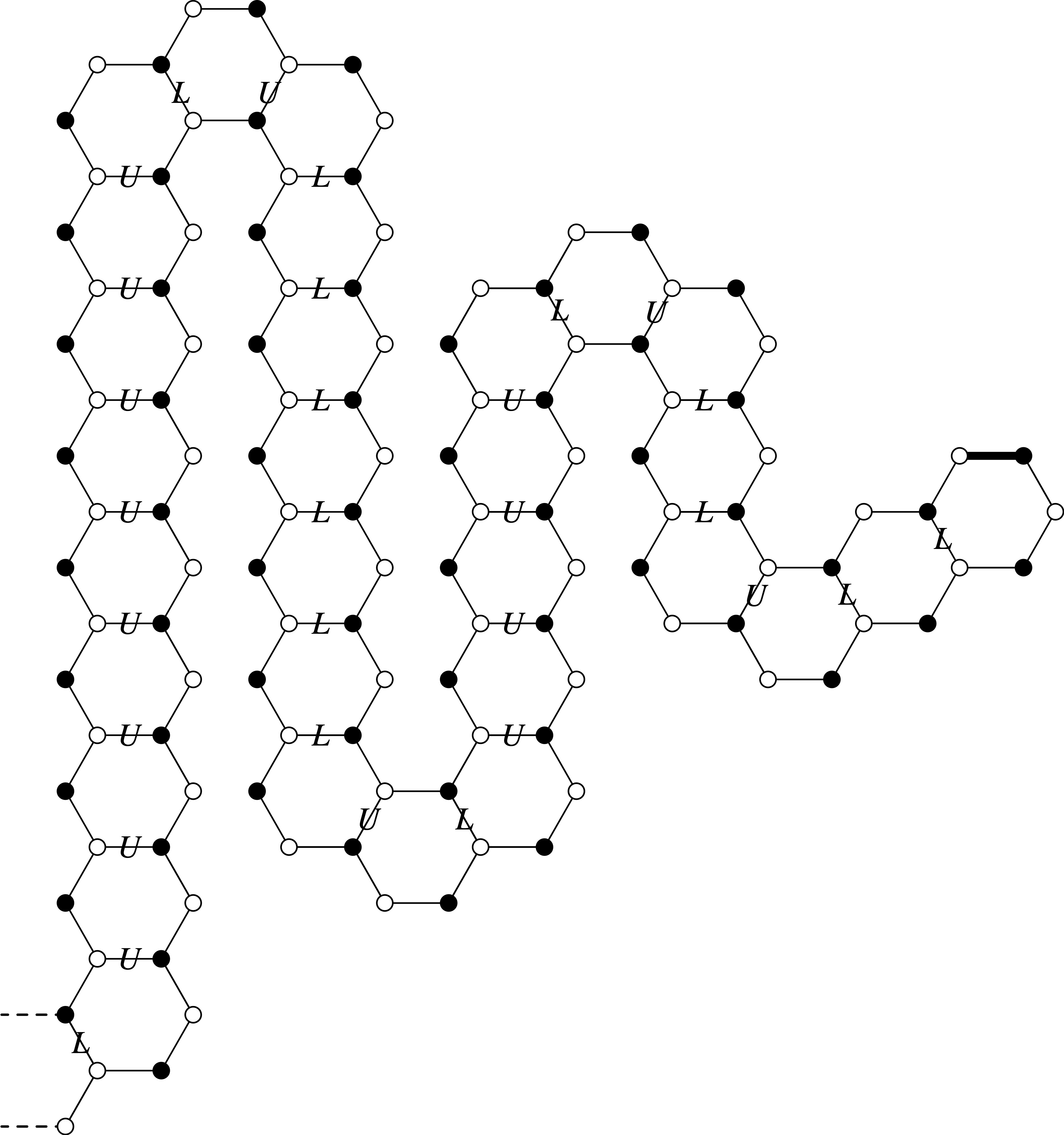}
\end{center}
\caption{A hexagon snake graph for $e$.}
\label{fig:e-snake}
\end{figure}

\section{Directions for future work}
\label{sec:future}

Perhaps the most important open question is
the relationship between $[x]_q$ and $\llb x \rrb_q$.
The former is defined via Laurent expansions
for all $q$ lying in some disk centered at 0,
and it has been shown by Etingof~\cite{E25}
that the function has an essential singularity
that prevents it from being extended
to arbitrarily large disks.
On the other hand, $\llb x \rrb_q$
is well-defined for all positive real $q$.
One might for instance hope that
wherever both $[x]_q$ and $\llb x \rrb_q$ are defined
one has $\llb x \rrb_q = q \: [x]_q$.

\bigskip

The approach taken here applies to the ``right'' $q$-rationals,
also known as the ``sharp'' $q$-rationals;
however, for each rational number $r$ there is a second deformation
(the ``left'' or ``flat'' $q$-deformation)
that also turns up in the purely algebraic theory~\cite{BBL23}.
It would be good to include the left-rationals in a fuller picture.
Also missing are the $q$-deformations of real numbers $x<0$;
one might bring them into the theory
by allowing negative powers of the matrices $L^{(q)}$ and $R^{(q)}$
(drawing inspiration from~\cite{P01}).
Note however that, in contrast with Theorem~\ref{thm:continuity},
$\llb r_n \rrb_q$ need not converge at all when $q$ is negative;
for instance, when the $r_n$ are the usual convergents
to the golden ratio and $q=-1$, the values of $\llb r_n \rrb_q$ 
form the 3-periodic (non-convergent) sequence
$-1,0,\infty,-1,0,\infty,\dots$.

There is a sense in which each positive rational $r$
acquires not two but three $q$-deformations
from the stat-mech perspective. 
The first of them arises from the finite snake $S_r$
constructed in section~\ref{sec:snakes},
equipped with the weighting described in section~\ref{sec:q}.
The second arises by prepending the half-infinite word $\cdots LLU$
to the transfer word for $S_r$;
the third arises by prepending the half-infinite word $\cdots UUL$
to the transfer word for $S_r$.
There is a sense in which the second $q$-deformation
is infinitesimally less than $\llb r \rrb_q$
while the third $q$-deformation is infinitesimally greater.
Two further infinite snakes of this kind are given by
the words $\cdots LLL$ and $\cdots UUU$,
respectively associated with $\infty$ and its formal reciprocal $1/\infty$.

\bigskip

The paper~\cite{MPS25} explores
the interpretation of the finite stat mech models of section~\ref{sec:q}
in terms of hyperbinary expansions of positive integers.
The authors give a different interpretation of $q$-weight
in terms of digit patterns in those expansions,
and briefly explore other ways of assigning weight.
For instance, one can deform $L$ and $U$ into
$$\left( \begin{array}{cc} 1 & 0 \\ r & s \end{array} \right) \mbox{\ and \ }
\left( \begin{array}{cc} r & s \\ 0 & 1 \end{array} \right)$$
for formal indeterminates $r,s$.
Such 2-by-2 matrices are
well-suited to being recast as transfer matrices in snake graphs,
where each filter corresponds to a monomial
whose exponents give more refined information about the filter
(more specifically, information about adjacent poset elements).
The approach applied in this paper should extend readily
into this broader context. 

Likewise, it should be possible to replace 2-by-2 matrices
by $k$-by-$k$ matrices with $k>2$,
along the line of development followed in~\cite{BOSZ24}.

\bigskip

{\sc Acknowledgment:} 
Both the writing of this article
and the research that went into it relied extensively
on ChatGPT and hence indirectly on the work of many
anonymous researchers whose work ChatGPT was trained on. 
The author retains full responsibility for the correctness
of all mathematical details and acknowledgments of prior
work. I thank Nicholas Ovenhouse and Valentin Ovsienko for 
helpful comments on earlier drafts of the article.


\end{document}